\newtheorem{theorem}{Theorem}
\newtheorem{definition}[theorem]{Definition}
\newtheorem{lemma}[theorem]{Lemma}
\newenvironment{proof}[1][Proof]{\textbf{#1.} }{\ \rule{0.5em}{0.5em}}
\newcommand{\calF}{{\cal F}}
\newcommand{\ep}{\varepsilon}
\begin{document}

\title{Equilibrium in Two-Player Non-Zero-Sum Dynkin Games in Continuous Time%
\thanks{%
The results presented in this paper were proven while the authors attended
the workshop on ``Repeated Games and Differential Games'', organized by Marc
Quincampoix and Sylvain Sorin in November 2008, Roscoff, France. We thank
Said Hamad\`ene for his assistance and for his helpful comments. The work of
Solan was supported by the Israel Science Foundation, Grant 212/09.}}
\author{Rida Laraki\thanks{%
CNRS \textit{and} Laboratoire d'Econom\'{e}trie de l'Ecole Polytechnique,
91128, Palaiseau, France. Email: rida.laraki@polytechnique.edu} \ and Eilon
Solan\thanks{%
Corresponding author: School of Mathematical Sciences, Tel Aviv University,
Tel Aviv 69978, Israel. Email: eilons@post.tau.ac.il}}
\maketitle

\begin{abstract}
We prove that every two-player non-zero-sum Dynkin game in continuous time
admits an $\varepsilon$-equilibrium in randomized stopping times. We provide
a condition that ensures the existence of an $\varepsilon$-equilibrium in
non-randomized stopping times.
\end{abstract}

\bigskip

\noindent \textbf{Keywords: } Dynkin games, stopping games, equilibrium,
stochastic analysis, continuous time.\bigskip

%\bigskip\bigskip
%\noindent \textbf{AMS Classification: } 91A05
\newpage

\section{Introduction}

Dynkin games (Dynkin, 1969) serve as a model of optimal stopping. These
games were applied in various setups, including wars of attrition (see,
e.g., Maynard Smith (1974), Ghemawat and Nalebuff (1985) and Hendricks et
al. (1988)), pre-emption games (see, e.g., Fudenberg and Tirole (1991,
section 4.5.3)), duels (see, e.g., Blackwell (1949), Bellman and Girshick
(1949), Shapley (1951), Karlin (1959), and the survey by Radzik and Raghavan
(1994)), and pricing of options (Kifer (2000), Hamad\`{e}ne (2006)).

The existence of a value in randomized strategies in Dynkin games, in its
general setting, has been settled only recently (see Rosenberg, Solan and
Vieille (2001) for discrete time games, and Laraki and Solan (2005), for
continuous time games). The existence of an $\varepsilon $-equilibrium in
randomized strategies in non-zero-sum games has been proven for two-player
games in discrete time (Shmaya and Solan, 2004), and for games in continuous
time under certain conditions (see, e.g., Laraki, Solan and Vieille, 2005).

In the present paper we prove that every two-player non-zero-sum Dynkin game
in continuous time admits an $\varepsilon$-equilibrium in randomized
strategies, for every $\varepsilon > 0$. We further show how such an
equilibrium can be constructed, and we provide a condition under which there
exists an $\varepsilon$-equilibrium in non-randomized strategies. Rather
than using the Snell envelope, as, e.g., in Hamad\`{e}ne and Zhang (2010),
our technique is to use results from zero-sum games.

We note that three-player Dynkin games in continuous time may fail to admit
an $\varepsilon $-equilibrium in randomized strategies, even if the payoff
processes are constant (Laraki, Solan and Vieille, 2005, Section 5.2). Thus,
our result completes the mapping of Dynkin games in continuous time that
admit an $\varepsilon$-equilibrium in randomized stopping times.

The paper is organized as follows. The model and the main results appear in
Section \ref{sec model}. In Section \ref{sec0} we review known results
regarding zero-sum games that are then used in Section \ref{sec1} to prove
the main theorem.

\section{Model and Results}

\label{sec model}

Let $(\Omega ,\mathcal{A},P)$ be a probability space, and let $\mathcal{F}=(%
\mathcal{F}_{t})_{t\geq 0}$ be a filtration in continuous time that
satisfies ``the usual conditions''. That is, $\mathcal{F}$ is right
continuous, and $\mathcal{F}_{0}$ contains all $P$-null sets: for every $%
B\in \mathcal{A}$ with $P(B)=0$ and every $A\subseteq B,$ one has $A \in
\mathcal{F}_{0}$. All stopping times in the sequel are w.r.t.~the filtration
$\mathcal{F}$.

Denote $\mathcal{F}_{\infty }:=\vee _{t\geq 0}\mathcal{F}_{t}$. Assume
without loss of generality that $\mathcal{F}_{\infty }=\mathcal{A}.$ Hence $%
(\Omega ,\mathcal{A},P)$ is a complete probability space.

Let $(X_{i},Y_{i},Z_{i})_{i=1,2}$ be uniformly bounded $\mathcal{F}$-adapted
real-valued processes,\footnote{%
Our results hold for the larger class of $\mathcal{D}$ payoff processes
defined by Dellacherie and Meyer, 1975, \S II-18. This class contains in
particular integrable processes.} and let $(\xi _{i})_{i=1,2}$ be two
bounded real-valued $\mathcal{F}_{\infty }$-measurable functions. In the
sequel we will assume that the processes $(X_{i},Y_{i},Z_{i})_{i=1,2}$ are
right continuous.

\begin{definition}
A \emph{two-player non-zero-sum Dynkin game} over $(\Omega ,\mathcal{A},P,%
\mathcal{F})$ with payoffs \newline
$(X_{i},Y_{i},Z_{i},\xi _{i})_{i=1,2}$ is the game with player set $%
N=\{1,2\} $, the set of pure strategies of each player is the set of
stopping times, and the payoff function of each player $i\in \{1,2\}$ is:
\begin{equation}
\gamma _{i}(\lambda _{1},\lambda _{2}):={\mathbf{E}}\left[ X_{i}(\lambda
_{1}){\mathbf{1}}_{\{\lambda _{1}<\lambda _{2}\}}+Y_{i}(\lambda _{2}){%
\mathbf{1}}_{\{\lambda _{2}<\lambda _{1}\}}+Z_{i}(\lambda _{1}){\mathbf{1}}%
_{\{\lambda _{1}=\lambda _{2}<\infty \}}+\xi _{i}{\mathbf{1}}_{\{\lambda
_{1}=\lambda _{2}=\infty \}}\right] ,  \label{equ 1}
\end{equation}%
where $\lambda _{1}$ and $\lambda _{2}$ are the stopping times chosen by the
two players respectively.
\end{definition}

In words, the process $X_{i}$ represents the payoff to player $i$ if player
1 stops before player 2, the process $Y_{i}$ represents the payoff to player
$i$ if player 2 stops before player 1, the process $Z_{i}$ represents the
payoff to player $i$ if the two players stop simultaneously, and the
function $\xi _{i}$ represents the payoff to player $i$ if no player ever
stops.

The game is \emph{zero-sum} if $X_{1}+X_{2}=Y_{1}+Y_{2}=Z_{1}+Z_{2}=\xi
_{1}+\xi _{2}=0$.

In non-cooperative game theory, a randomized strategy is a probability
distribution over pure strategies, with the interpretation that at the
outset of the game the player randomly chooses a pure strategy according to
the probability distribution given by the randomized strategy, and uses it
along the game. In the setup of Dynkin games in continuous time, a
randomized strategy is a randomized stopping time, which is defined as
follows.

\begin{definition}
\label{def2} A \emph{randomized stopping time} for player $i$ is a
measurable function $\varphi _{i}:[0,1]\times \Omega \rightarrow \lbrack
0,+\infty ]$ such that the function $\varphi _{i}(r,\cdot ):\Omega
\rightarrow \lbrack 0,+\infty ]$ is a stopping time for every $r\in \lbrack
0,1]$ (see Aumann (1964)).
\end{definition}

Here the interval $[0,1]$ is endowed with the Borel $\sigma $-field. For
strategically equivalent definitions of randomized stopping times, see Touzi
and Vieille (2002). The interpretation of Definition \ref{def2} is that
player $i$ chooses $r$ in $[0,1]$ according to the uniform distribution and
then stops at the stopping time $\varphi _{i}(r,\cdot )$. Throughout the
paper, the symbols $\lambda $, $\mu $ and $\tau $ stand for stopping times,
and $\varphi $ and $\psi $ stand for randomized stopping times.

The \emph{expected payoff} for player $i$ that corresponds to a pair of
randomized stopping times $(\varphi _{1},\varphi _{2})$ is:
\begin{equation*}
\gamma_i (\varphi _{1},\varphi _{2}):=\int_{[0,1]^{2}}\gamma_i (\varphi
_{1}(r,\cdot),\varphi _{2}(s,\cdot))\ dr\ ds, \ \ \ \ \ i=1,2.
\end{equation*}

In the sequel we will also consider the expected payoff at a given time $t$.
We therefore define for every $t \geq 0$ and every pair of randomized
stopping times $\varphi_1,\varphi_2 \geq t$:
\begin{equation}
\gamma _{i}(\varphi _{1},\varphi _{2}\mid \mathcal{F}_t ):= {\mathbf{E}}%
[X_{i}(\varphi _{1}){\mathbf{1}}_{\{\varphi _{1}<\varphi
_{2}\}}+Y_{i}(\varphi _{2}){\mathbf{1}}_{\{\varphi _{2}<\varphi
_{1}\}}+Z_{i}(\varphi _{1}){\mathbf{1}}_{\{\varphi _{1}=\varphi _{2}<\infty
\}}+\xi _{i}{\mathbf{1}}_{\{\varphi _{1}=\varphi _{2}=\infty \}}\mid
\mathcal{F}_t].  \label{equ 1b}
\end{equation}

A pair of randomized stopping times $(\varphi_1^*,\varphi_2^*)$ is an $%
\varepsilon$-equilibrium if no player can profit more than $\varepsilon $ by
deviating from $\varphi_i^*$.

\begin{definition}
Let $\varepsilon \geq 0$. A pair of randomized stopping times $(\varphi
_{1}^{\ast },\varphi _{2}^{\ast })$ is an \emph{$\varepsilon $-equilibrium}
if for every two randomized stopping times $\varphi _{1},\varphi _{2} $ the
following inequalities hold:
\begin{equation}
\gamma _{1}(\varphi _{1},\varphi _{2}^{\ast })\leq \gamma _{1}(\varphi
_{1}^{\ast },\varphi _{2}^{\ast })+\varepsilon ,  \label{equ 2}
\end{equation}%
and
\begin{equation}
\gamma _{2}(\varphi _{1}^{\ast },\varphi _{2})\leq \gamma _{2}(\varphi
_{1}^{\ast },\varphi _{2}^{\ast })+\varepsilon.  \label{equ 2a}
\end{equation}
\end{definition}

Because of the linearity of the payoff function, Eqs. (\ref{equ 2}) and (\ref%
{equ 2a}) hold for every randomized stopping time $\varphi_1$ and $\varphi_2$
respectively as soon as they hold for non-randomized stopping times.

Our goal in this paper is to prove the existence of an $\varepsilon $%
-equilibrium in two-player non-zero-sum games, and to construct such an $%
\varepsilon $-equilibrium.

Suppose that a player wants to stop at the stopping time $\lambda$, but he
would like to mask the exact time at which he stops (for example, so that
the other player cannot stop at the very same moment as he does). To this
end, he can stop at a randomly chosen time in a small interval $%
[\lambda,\lambda+\delta]$, and, since the payoff processes are right
continuous, he will not lose (or gain) much relative to stopping at time $%
\lambda $. This leads us to the following class of simple randomized
stopping times that will be extensively used in the sequel.

\begin{definition}
\label{def simple} A randomized stopping time $\varphi $ is \emph{simple} if
there exist a stopping time $\lambda$ and a $\mathcal{F}_\lambda$-measurable
non-negative function $\delta \geq 0$, such that for every $r\in \lbrack
0,1] $ one has $\varphi (r,\cdot)=\lambda +r\delta $. The stopping time $%
\lambda$ is called the \emph{basis} of $\varphi$, and the function $\delta$
is called the \emph{delay} of $\varphi$.
\end{definition}

Since $\varphi (r,\cdot)\geq \lambda $ and $\varphi (r,\cdot)$ is $\mathcal{F%
}_{\lambda } $-measurable, by Dellacherie and Meyer (1975, \S IV-56), $%
\varphi (r,\cdot)$ is a stopping time for every $r\in \lbrack 0,1].$
Consequently, $\varphi $ is indeed a randomized stopping time.

Definition \ref{def simple} does not require that $\lambda$ is finite:%
\footnote{%
A statement \emph{holds} on a measurable set $A$ if and only if the set of
points in $A$ that do not satisfy the statement has probability 0.} on the
set $\{\lambda = \infty\}$ we have $\varphi(r,\cdot) = \infty$ for every $r
\in [0,1]$. On the set $\{\delta = 0\}$ the randomized stopping time $\varphi
$ that is defined in Definition \ref{def simple} stops at time $\lambda$
with probability 1. On the set $\{\delta > 0\}$ the stopping time is
``non-atomic'' yet finite, and in particular for every stopping time $\mu$
we have ${\mathbf{P}}(\{\delta > 0\} \cap \{\varphi = \mu\}) = 0$.

We now state our main results.

\begin{theorem}
\label{theorem1} Every two-player non-zero-sum Dynkin game with
right-continuous and uniformly bounded payoff processes admits an $%
\varepsilon$-equilibrium in simple randomized stopping times, for every $%
\varepsilon > 0$.
\end{theorem}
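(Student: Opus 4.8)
The plan is to reduce the non-zero-sum problem to a pair of auxiliary zero-sum games, whose values and optimal strategies are available by the results reviewed in Section~\ref{sec0}, and then to combine the optimal stopping times of those zero-sum games into a single pair of simple randomized stopping times that is an approximate equilibrium of the original game. The key idea, as hinted in the introduction, is to exploit the structure of zero-sum Dynkin games rather than Snell-envelope constructions: I would attach to each player~$i$ a zero-sum game in which player~$i$ plays against a fictitious adversary who controls the other coordinate, so that the value function of that game provides, simultaneously, a candidate strategy for player~$i$ and an upper bound on what the opponent can gain by deviating.

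First I would set up notation for the zero-sum values. Fix $\varepsilon>0$. Using the known existence of the value in randomized stopping times (Laraki and Solan, 2005), I would obtain, for each player, optimal or $\varepsilon$-optimal randomized stopping times in the associated zero-sum games, and record the corresponding value processes. The next step is to locate a ``good'' stopping time $\lambda$ at which both players are close to indifferent between stopping and continuing, measured against the zero-sum values; this is where the continuous-time machinery (right-continuity of the payoff processes, optional sampling for the value processes) enters. The simple randomized stopping times of Definition~\ref{def simple} are then built by taking $\lambda$ as the common basis and choosing a small $\mathcal{F}_\lambda$-measurable delay $\delta>0$, so that each player's actual stopping instant is non-atomic. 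The crucial consequence, recorded after Definition~\ref{def simple}, is that $\prob(\{\delta>0\}\cap\{\varphi=\mu\})=0$ for every stopping time $\mu$; this lets me treat ties as negligible and decouple the two players' stopping events up to an error controlled by $\delta$ and by right-continuity.

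The main obstacle, I expect, is verifying the equilibrium inequalities \eqref{equ 2} and \eqref{equ 2a} uniformly over \emph{all} deviations, not merely over the family from which the candidate was extracted. For each player~$i$, a deviation to an arbitrary randomized stopping time $\varphi_i$ must be shown not to improve $\gamma_i$ by more than $\varepsilon$; by the linearity remark following the $\varepsilon$-equilibrium definition it suffices to consider non-randomized deviations. I would bound the deviation payoff by the value of the corresponding zero-sum game evaluated along the opponent's fixed strategy $\varphi_{3-i}^*$, using optional sampling and the fact that $\varphi_{3-i}^*$ stops non-atomically so that the deviator cannot exploit exact simultaneity. Controlling the discretization errors introduced by the delay $\delta$ and by passing from the zero-sum value at $\lambda$ back to the original payoff \eqref{equ 1b} is the delicate quantitative part; right-continuity guarantees these errors can be made smaller than $\varepsilon$ by shrinking $\delta$.

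Finally I would assemble the estimates: the two inequalities, each derived from its own zero-sum comparison, hold simultaneously because the candidate strategies were defined from a common basis $\lambda$ chosen to respect both zero-sum value processes at once. Collecting the error terms and choosing $\delta$ small enough yields an $\varepsilon$-equilibrium in simple randomized stopping times, which is exactly the assertion of Theorem~\ref{theorem1}. I would expect the write-up to proceed by first stating the zero-sum results as lemmas, then constructing $(\varphi_1^*,\varphi_2^*)$ explicitly, and then verifying \eqref{equ 2} and \eqref{equ 2a} in turn, with the non-atomicity of the delays doing the heavy lifting in ruling out profitable deviations that rely on matching the opponent's stopping time.
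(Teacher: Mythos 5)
Your high-level theme (reduce to the zero-sum games $\Gamma_i(t)$, use their values $v_i$ and $\varepsilon$-optimal strategies, and use simple randomized stopping times with small delays to neutralize ties) matches the paper's, but the construction you describe fails at its central step. You propose to find a single stopping time $\lambda$ at which \emph{both} players are nearly indifferent and to let \emph{both} equilibrium strategies be simple randomized stopping times with common basis $\lambda$ and small delay. First, such a $\lambda$ need not exist: with constant payoffs $X_1\equiv 0$, $Y_1\equiv 1$, player 1 strictly prefers the opponent's stopping at every instant. Second, even where it exists, a profile in which both players stop inside $[\lambda,\lambda+\delta]$ deters almost no deviations: a player for whom $Y_1(\lambda) > X_1(\lambda)+\varepsilon$ simply waits past $\lambda+\delta$, collects approximately $Y_1(\lambda)$, and gains. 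Non-atomicity of the delay only rules out exploiting exact simultaneity; it does nothing against waiting. Relatedly, your claim that a deviation payoff is bounded by ``the value of the corresponding zero-sum game evaluated along the opponent's fixed strategy'' is only valid when the opponent's strategy is an $\varepsilon$-optimal \emph{minimizing} strategy in the deviator's zero-sum game; a stopping time concentrated near $\lambda$ is not such a strategy.

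What the paper actually does, and what your sketch is missing, is (i) a case decomposition and (ii) explicit punishment continuations built into the equilibrium strategies. At time $0$ the paper splits $\Omega$ into $\mathcal{F}_0$-measurable sets according to inequalities among $X_i(0)$, $Y_i(0)$, $Z_i(0)$, $v_i(0)$ (Parts 1--5): on these sets exactly one player (or both, simultaneously) stops immediately, while the other player's strategy is the punishment $\psi_{3-i}(\delta_0,\varepsilon)$, an $\varepsilon$-optimal minimizer in the deviator's zero-sum game, which never acts on path but holds any deviator down to approximately $v_i$. On the remaining set $\{X_1(0)<v_1(0)\}\cap\{Y_2(0)<v_2(0)\}$, no player is willing to stop at time $0$; the paper waits until $\mu^\eta=\min\{\mu_1^\eta,\mu_2^\eta\}$, the first time at which at least \emph{one} player's stopping payoff comes within $\eta$ of his value (Lepeltier--Maingueneau), handles the event $\{\mu^\eta=\infty\}$ (nobody ever stops, payoff $\xi_i$) and six further subcases at $\mu^\eta$ (Table 1), again with punishments off path. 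Finally, to deter \emph{early} stopping --- a deviation your proposal never addresses --- one needs the submartingale property of $v_1$ up to $\mu_1^\eta$ (Lemmas \ref{Lemma submartingale} and \ref{lemma exp v}): for $\lambda_1<\mu^\eta$ one has $X_1(\lambda_1)<v_1(\lambda_1)-\eta\leq{\mathbf{E}}[v_1(\mu^\eta)\mathbf{1}_{\{\mu^\eta<\infty\}}+\xi_1\mathbf{1}_{\{\mu^\eta=\infty\}}\mid\mathcal{F}_{\lambda_1}]$, by the very definition of $\mu_1^\eta$, which is then compared to the equilibrium payoff. Without the case analysis, the punishment structure, and this submartingale argument, the equilibrium inequalities \eqref{equ 2} and \eqref{equ 2a} cannot be verified.
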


Moreover, the delay of the simple randomized stopping time that constitute
the $\varepsilon$-equilibrium can be arbitrarily small.

Theorem \ref{theorem1} was proved by Laraki and Solan (2005) for two-player
\emph{zero-sum} games. Our proof heavily relies on the results of Laraki and
Solan (2005), and we use $\varepsilon$-equilibria in zero-sum games to
construct an $\varepsilon$-equilibrium in the non-zero-sum game.

Under additional conditions on the payoff processes, the $\varepsilon$%
-equilibrium is given in non-randomized stopping times.

\begin{theorem}
\label{theorem2} Under the assumptions of Theorem \ref{theorem1}, if $%
Z_{1}(t)\in co\{X_{1}(t),Y_{1}(t)\}$ and $Z_{2}(t)\in co\{X_{2}(t),Y_{2}(t)\}
$ for every $t \geq 0$, then the game admits an $\varepsilon $-equilibrium
in non-randomized stopping times, for every $\varepsilon >0$.
\end{theorem}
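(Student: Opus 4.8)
The plan is to start from the $\varepsilon$-equilibrium supplied by Theorem~\ref{theorem1} and to \emph{de-randomize} it, using the hypothesis $Z_i(t)\in co\{X_i(t),Y_i(t)\}$ to break ties in a way that neither disturbs the equilibrium payoffs nor opens a profitable deviation. Concretely, fix $\varepsilon>0$ and invoke Theorem~\ref{theorem1} to obtain a pair $(\varphi_1^\ast,\varphi_2^\ast)$ of simple randomized stopping times forming an $\varepsilon$-equilibrium, with bases $\lambda_1,\lambda_2$ and delays $\delta_1,\delta_2$ which, as noted after Theorem~\ref{theorem1}, may be taken uniformly as small as we wish. I would build from these a non-randomized profile $(\tau_1^\ast,\tau_2^\ast)$ that coincides with the bases off the coincidence set $T:=\{\lambda_1=\lambda_2<\infty\}$ and that, on $T$, stops the two players in a deterministic order to be chosen below.

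First I would dispose of the region away from $T$. On $\{\lambda_1\neq\lambda_2\}$ the only discrepancy between $(\varphi_1^\ast,\varphi_2^\ast)$ and the bases comes either from smearing each stop over a window of length $\delta_i$, or from the thin set $\{0<|\lambda_1-\lambda_2|<\max\{\delta_1,\delta_2\}\}$ on which the two windows still overlap. Since the payoff processes are bounded and right continuous, the first effect costs $o(1)$ as the delays shrink, and the second set decreases to a null set as the delays shrink, so its contribution is also $o(1)$. Thus, up to an error I can absorb into $\varepsilon$, the whole question reduces to what happens on $T$.

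The heart of the argument is the local analysis on $T$, and this is the step I expect to be the main obstacle. On $T$ the randomized equilibrium behaves like a war of attrition in which player~$1$ stops first (payoff $(X_1,X_2)$ at $\lambda:=\lambda_1$) with some conditional probability $p$ and player~$2$ stops first (payoff $(Y_1,Y_2)$) with probability $1-p$, while true simultaneity has probability $0$ whenever the delays are positive. The key point is that the equilibrium constraints pin down the race: writing out that neither player can gain $\varepsilon$ by instead stopping exactly at $\lambda$ (securing the ``first'' payoff) or by waiting past the window (securing the ``second'' payoff) forces, up to $O(\varepsilon)$, either $p\approx1$, or $p\approx0$, or the degenerate situation $X_i\approx Y_i$ for $i=1,2$. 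I would then define the tie-break on $T$ by delaying the ``second'' player by an arbitrarily small $\mathcal{F}_\lambda$-measurable amount, letting player~$1$ go first when $p\approx1$ and player~$2$ when $p\approx0$ (in the degenerate case either order works). In every case the resulting pure outcome agrees, up to $O(\varepsilon)$, with the race outcome $pX_i+(1-p)Y_i$, so both equilibrium payoffs are preserved.

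It remains to check that $(\tau_1^\ast,\tau_2^\ast)$ admits no profitable deviation, and here the convexity hypothesis does the decisive work. Against a randomized opponent a player could already secure, at the coincidence point, either $X_i$ (by stopping at the base, hence first with probability one) or $Y_i$ (by waiting past the window), hence essentially $\max\{X_i,Y_i\}$. The one genuinely new option created by de-randomizing the opponent is to match it exactly and collect the simultaneous payoff $Z_i$; but $Z_i\in co\{X_i,Y_i\}$ gives $Z_i\le\max\{X_i,Y_i\}$, so this matching deviation yields no more than what was already available against $(\varphi_1^\ast,\varphi_2^\ast)$. Every other deviation against $\tau_{-i}^\ast$ can be mirrored by a deviation against $\varphi_{-i}^\ast$ that is at least as good up to $o(1)$ (stop at the same time, shifting by the vanishing delay when necessary). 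Combining this with the fact that $(\varphi_1^\ast,\varphi_2^\ast)$ is an $\varepsilon$-equilibrium and with the payoff preservation of the previous paragraph shows that $(\tau_1^\ast,\tau_2^\ast)$ is an $O(\varepsilon)$-equilibrium in non-randomized stopping times; rescaling $\varepsilon$ completes the proof.
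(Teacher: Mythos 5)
Your central insight---that the convexity hypothesis $Z_i\in co\{X_i,Y_i\}$ makes the ``match the opponent's pure stop'' deviation unprofitable, since $Z_i\le\max\{X_i,Y_i\}$ and both $X_i$ and $Y_i$ are essentially securable---is exactly the insight the paper uses. But the scaffolding around it has a genuine gap: you treat the equilibrium of Theorem \ref{theorem1} as a black box, and the reduction to the coincidence set does not survive that. For a \emph{fixed} equilibrium $(\varphi_1^\ast,\varphi_2^\ast)$ with bases $\lambda_i$ and delays $\delta_i$, replacing each $\varphi_i^\ast$ by its base is \emph{not} an $o(1)$ perturbation of payoffs: right continuity only makes payoffs inside the window close to payoffs at the base when the delay is small relative to a modulus of continuity that depends on the base (the process may jump inside the window), and nothing in the statement of Theorem \ref{theorem1} guarantees this. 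Your repair---``as the delays shrink''---is not available: the remark after Theorem \ref{theorem1} says that for each delay bound there \emph{exists} an equilibrium with delays that small, but these are \emph{different} equilibria with \emph{different} bases $\lambda_1,\lambda_2$, so neither the smearing error nor the probability of the overlap set $\{0<|\lambda_1-\lambda_2|<\max\{\delta_1,\delta_2\}\}$ tends to zero along any single object; there is no monotone sequence of sets decreasing to a null set. The same issue infects your deviation-mirroring step (``shifting by the vanishing delay''). A second, flagged-but-unresolved gap is the ``race'' dichotomy on $T$: the $\varepsilon$-equilibrium inequality is ex ante, while your claim ($p\approx 0$, $p\approx 1$, or $X_i\approx Y_i$) is conditional on $\mathcal{F}_\lambda$; localizing deviations to $\mathcal{F}_\lambda$-measurable subsets of $T$ only yields the dichotomy outside exceptional sets of probability $\varepsilon/\alpha$ when the conditional gain exceeds $\alpha$, so the errors degrade (typically to $O(\sqrt{\varepsilon})$) and must be tracked; you assert the conclusion rather than carry this out. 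Finally, the race picture presupposes strictly positive delays on $T$, whereas the given equilibrium may already have zero delay there, which requires a separate (if easy) case split.

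The properties your argument needs---delays chosen \emph{after} the bases so that right-continuity bounds like (\ref{equ 59})--(\ref{equ 61}) hold at those bases, and punishment windows disjoint from stopping windows so that overlaps never occur---are true of the construction inside the proof of Theorem \ref{theorem1}, but they are not consequences of its statement. That is precisely why the paper does not de-randomize a black-box equilibrium: it re-enters its own construction, purifies the punishment components via Lemma \ref{Lemma purification} (the zero-sum purification result, which your proposal never invokes and which is not recoverable from your matching argument alone at the level of generality you work in), and on the three sets where a player smeared his stop ($A_4$, $A_{61}$, $A_{62}$) replaces the smeared stop by a pure stop at the base, using convexity exactly as you do: $X_2(0)\ge Z_2(0)\ge Y_2(0)$ on $A_4$, and $Y_2(\mu_1^\eta)\le Z_2(\mu_1^\eta)\le X_2(\mu_1^\eta)$ on $A_{61}$ via (\ref{equ martingale2}). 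So your key idea is correct, but to close the proof it must be applied inside the construction of Theorem \ref{theorem1}, not to an arbitrary equilibrium delivered by its statement.
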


Hamad\`{e}ne and Zhang (2010) proved the existence of a 0-equilibrium in
non-randomized stopping times under stronger conditions than those in
Theorem \ref{theorem2}, using the notion of Snell envelope of processes
(see, e.g., El-Karoui (1980) for more details).

The rest of the paper is devoted to the proofs of Theorems \ref{theorem1}
and \ref{theorem2}. We will assume w.l.o.g.~that the payoff processes are
bounded between 0 and 1.

\section{The Zero-Sum Case}

\label{sec0}

In the present section we summarize several results on zero-sum games, taken
from Laraki and Solan (2005), that will be used in the sequel, and prove
some new results on zero-sum games.

For every $t\geq 0$ denote
\begin{eqnarray}
v_{1}(t) &:=&{\mathrm{ess-sup}}_{\varphi _{1}\geq t}{\mathrm{ess-inf}}%
_{\lambda _{2}\geq t}{\mathbf{E}}{\large [}X_{1}(\varphi _{1}){\large {%
\mathbf{1}}}_{\{\varphi _{1}<\lambda _{2}\}}+Y_{1}(\lambda _{2}){\large {%
\mathbf{1}}}_{\{\lambda _{2}<\varphi _{1}\}}  \label{equ zero1} \\
&&\ \ \ +Z_{1}(\varphi _{1}){\mathbf{1}}_{\{\varphi _{1}=\lambda _{2}<\infty
\}}+\xi _{1}{\mathbf{1}}_{\{\varphi _{1}=\lambda _{2}=\infty \}}\mid
\mathcal{F}_{t}{\large ]},  \notag
\end{eqnarray}%
where the supremum is over all randomized stopping times $\varphi _{1}\geq t$%
, and the infimum is over all (non-randomized) stopping times $\lambda
_{2}\geq t$. This is the highest payoff that player 1 can guarantee in the
zero-sum Dynkin game $\Gamma _{1}(t)$ where the payoffs are those of player
1, player 1 is the maximizer, player 2 is the minimizer, and the game starts
at time $t$. Similarly, the highest payoff that player 2 can guarantee in
the zero-sum Dynkin game $\Gamma _{2}(t)$ where the payoffs are those of
player 2, player 2 is the maximizer, player 1 is the minimizer, and the game
starts at time $t$, is given by:
\begin{eqnarray}
v_{2}(t) &:=&{\mathrm{ess-sup}}_{\varphi _{2}\geq t}{\mathrm{ess-inf}}%
_{\lambda _{1}\geq t}{\mathbf{E}}{\large [}X_{2}(\lambda _{1}){\large {%
\mathbf{1}}}_{\{\lambda _{1}<\varphi _{2}\}}+Y_{2}(\varphi _{2}){\large {%
\mathbf{1}}}_{\{\varphi _{2}<\lambda _{1}\}}  \label{equ zero2} \\
&&\ \ \ +Z_{2}(\lambda_1){\mathbf{1}}_{\{\lambda _{1}=\varphi _{2}<\infty
\}}+\xi _{2}{\mathbf{1}}_{\{\lambda _{1}=\varphi _{2}=\infty \}}\mid
\mathcal{F}_{t}{\large ]}.  \notag
\end{eqnarray}

The next lemma, which is proved in Laraki and Solan (2005), states that $%
v_1(t)$ (resp. $v_2(t)$) is in fact the value of the zero-sum games $%
\Gamma_1(t)$ (resp. $\Gamma_2(t)$). This lemma is proved
in Laraki and Solan (2005) when $\mathcal{F}_t$ is the trivial $\sigma$%
-algebra. Its proof can be adapted to a general $%
\mathcal{F}_t$ (see the discussion in Appendix \ref{appendix}).

\begin{lemma}
\label{lemma8}
\begin{eqnarray}
v_{1}(t) &=&{\mathrm{ess-inf}}_{ \psi_{2}\geq t}{\mathrm{ess-sup}}_{
\lambda_{1}\geq t}{\mathbf{E}}{\large [}X_{1}(\lambda_{1}){\large {\mathbf{1}%
}}_{\{\lambda_{1}<\psi _{2}\}}{\large +}Y_{1}(\psi _{2}){\large {\mathbf{1}}}%
_{\{\psi _{2}<\lambda _{1}\}}  \label{equ zero1b} \\
&&\ \ \ +Z_{1}(\lambda _{1}){\mathbf{1}}_{\{\lambda _{1}=\psi _{2}<\infty
\}}+\xi _{1}{\mathbf{1}}_{\{\lambda _{1}=\psi _{2}=\infty \}}\mid \mathcal{F}%
_{t}{\large ]},  \notag
\end{eqnarray}%
and
\begin{eqnarray}
v_2(t) &=&{\mathrm{ess-inf}}_{\psi _{1}\geq t}{\mathrm{ess-sup}}_{\lambda
_{2}\geq t}{\mathbf{E}}{\large [}X_{2}(\psi _{1}){\large {\mathbf{1}}}%
_{\{\psi _{1}<\lambda _{2}\}}+ Y_{2}(\lambda _{2}){\large {\mathbf{1}}}%
_{\{\lambda _{2}<\psi _{1}\}}  \label{equ zero2b} \\
&&\ \ \ +Z_{2}(\psi _{1}){\mathbf{1}}_{\{\psi _{1}=\lambda _{2}<\infty
\}}+\xi _{2}{\mathbf{1}}_{\{\psi _{1}=\lambda _{2}=\infty \}}\mid \mathcal{F}%
_{t}{\large ]},  \notag
\end{eqnarray}%
where the infimum in (\ref{equ zero1b}) is over all randomized stopping
times $\psi_2 \geq t$ for player 2, the supremum in (\ref{equ zero1b}) is
over all (non-randomized) stopping times $\lambda_1 \geq t$ for player 1,
the infimum in (\ref{equ zero2b}) is over all randomized stopping times $%
\psi_1 \geq t$ for player 1, and the supremum in (\ref{equ zero2b}) is over
all (non-randomized) stopping times $\lambda_2 \geq t$ for player 2.
\end{lemma}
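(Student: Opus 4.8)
The plan is to read Lemma~\ref{lemma8} as the assertion that each of the zero-sum games $\Gamma_1(t)$ and $\Gamma_2(t)$ has a value, and to reduce this to the unconditional result of Laraki and Solan (2005). By the symmetry between the two statements it suffices to treat $\Gamma_1(t)$. The quantity $v_1(t)$ in (\ref{equ zero1}) is the \emph{lower} conditional value: the maximizer (player~1) randomizes, while the minimizer (player~2) is confined to pure stopping times. The right-hand side of (\ref{equ zero1b}) is the corresponding \emph{upper} value: now the minimizer randomizes and the maximizer is confined to pure stopping times. The content of the lemma is precisely that these two quantities coincide almost surely.

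First I would remove the asymmetry between the two formulations by exploiting the linearity of the payoff in the randomization device. Fixing $\varphi_1 \geq t$ and using Fubini for conditional expectations,
\[
\gamma_1(\varphi_1,\psi_2 \mid \mathcal{F}_t) = \int_0^1 \gamma_1\big(\varphi_1,\psi_2(s,\cdot) \mid \mathcal{F}_t\big)\, ds ,
\]
so every randomized response of player~2 is an average of pure responses; hence the essential infimum over randomized $\psi_2 \geq t$ equals the essential infimum over pure $\lambda_2 \geq t$ almost surely. The analogous statement for player~1 shows that the essential supremum over pure $\lambda_1 \geq t$ equals the essential supremum over randomized $\varphi_1 \geq t$. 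Consequently $v_1(t)$ equals the max--min value $\operatorname{ess\,sup}_{\varphi_1}\operatorname{ess\,inf}_{\psi_2}\gamma_1(\varphi_1,\psi_2\mid\mathcal{F}_t)$, while the right-hand side of (\ref{equ zero1b}) equals the min--max value $\operatorname{ess\,inf}_{\psi_2}\operatorname{ess\,sup}_{\varphi_1}\gamma_1(\varphi_1,\psi_2\mid\mathcal{F}_t)$, both now taken over randomized strategies for both players.

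It then remains to establish the minimax equality of these two quantities. The inequality in which the min--max dominates the max--min is immediate. For the reverse inequality, the existence of the value, I would invoke Laraki and Solan (2005), who prove exactly this when $\mathcal{F}_t$ is trivial. The hard part will be the passage to a general $\mathcal{F}_t$, where the plain suprema and infima become essential suprema and infima over $\Omega$. The route I would follow is to show that the family $\{\operatorname{ess\,inf}_{\psi_2}\gamma_1(\varphi_1,\psi_2\mid\mathcal{F}_t)\}_{\varphi_1}$ is upward-directed and the dual family is downward-directed: given two strategies one pastes them along the $\mathcal{F}_t$-measurable event on which one outperforms the other, and because this event lies in $\mathcal{F}_t$ and both stopping times are at least $t$, the conditional payoff of the pasted strategy decomposes accordingly and dominates the pointwise maximum of the two values. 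Directedness guarantees that the essential supremum and infimum are attained as monotone limits of countable sequences, which transports the unconditional minimax argument to the conditional setting and allows the resulting $\varepsilon$-optimal strategies to be glued into genuine randomized stopping times by a measurable selection, using that $\mathcal{F}$ satisfies the usual conditions. The same argument applied to the payoffs of player~2 gives (\ref{equ zero2b}), completing the proof.
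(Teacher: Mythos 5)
Your approach is genuinely different from the paper's, and most of it is sound, but as written it has a gap at the decisive step. The paper does not reduce Lemma \ref{lemma8} to the unconditional result of Laraki and Solan (2005): it re-runs their proof with conditional expectations, the only new ingredient being Lemma \ref{lemma new} of Appendix \ref{appendix}, which replaces the constant $\varepsilon$ and $\delta$ of their Lemma 4 by positive $\mathcal{F}_\lambda$-measurable functions (right-continuity controls the conditional payoff pointwise in $\omega$, not uniformly). You instead want to use their theorem as a black box and lift it from trivial $\mathcal{F}_t$ to general $\mathcal{F}_t$. Your preparatory steps are fine: the averaging identity that lets you pass between pure and randomized opponents, the trivial inequality between $v_1(t)$ and the conditional upper value $U(t)$ (the right-hand side of (\ref{equ zero1b})), and the pasting argument showing that $\{{\mathrm{ess-inf}}_{\lambda_2\geq t}\,\gamma_1(\varphi_1,\lambda_2\mid\mathcal{F}_t)\}_{\varphi_1}$ is upward directed while the dual family is downward directed. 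The gap is that directedness by itself does not ``transport the unconditional minimax argument to the conditional setting,'' and the mechanism you name --- gluing $\varepsilon$-optimal strategies of the conditional games by measurable selection --- is exactly the kind of $\omega$-by-$\omega$ construction that does not make sense here: strategies are stopping times, not objects chosen separately at each $\omega$, and the failure of such naive selection is the very reason the paper has to introduce measurable random delays.

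The correct way to finish from where you stand needs no selection at all. Directedness plus bounded convergence gives, for the game started at $t$,
\begin{equation*}
\sup_{\varphi_1\geq t}\ \inf_{\lambda_2\geq t}{\mathbf{E}}\left[\gamma_1(\varphi_1,\lambda_2)\right]={\mathbf{E}}\left[v_1(t)\right],\qquad
\inf_{\psi_2\geq t}\ \sup_{\lambda_1\geq t}{\mathbf{E}}\left[\gamma_1(\lambda_1,\psi_2)\right]={\mathbf{E}}\left[U(t)\right],
\end{equation*}
because each inner essential extremum is the a.s.\ monotone limit of a countable sequence from the corresponding directed family, so expectation commutes with it. Next, the two unconditional quantities on the left are the lower and upper values of a bona fide Dynkin game on the shifted filtration $\mathcal{G}_s:=\mathcal{F}_{t+s}$ with payoff processes $s\mapsto X_1(t+s)$, etc.; you should verify that $\mathcal{G}$ inherits the usual conditions and that $\mathcal{F}$-stopping times $\geq t$ correspond to $\mathcal{G}$-stopping times, so that Laraki and Solan (2005) applies and yields ${\mathbf{E}}[v_1(t)]={\mathbf{E}}[U(t)]$. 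Since $v_1(t)\leq U(t)$ a.s.\ and both are bounded, equality of the expectations forces $v_1(t)=U(t)$ a.s., which is (\ref{equ zero1b}); the statement for player 2 is symmetric. With this ending your reduction is complete and is a genuine alternative to the paper's route: it uses the zero-sum theorem only as a black box, at the price of the directedness bookkeeping, whereas the paper reopens the zero-sum proof but needs only the one new lemma on random delays.
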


A stopping time $\varphi_1$ (resp. $\psi_1$) that achieves the supremum in (%
\ref{equ zero1}) (resp. infimum in (\ref{equ zero2b})) up to $\varepsilon$
is called an \emph{$\varepsilon$-optimal} stopping time for player 1 in $%
\Gamma_1(t)$ (resp. $\Gamma_2(t)$). Similarly, a stopping time $\varphi_2$
(resp. $\psi_2$) that achieves the supremum in (\ref{equ zero2}) (resp.
infimum in (\ref{equ zero1b})) up to $\varepsilon$ is called an \emph{$%
\varepsilon$-optimal} stopping time for player 2 in $\Gamma_2(t)$ (resp. $%
\Gamma_1(t)$).

%For every positive $\calF_t$-measurable function $\ep$
%we will denote by $\varphi_i(t,\ep)$ an $\ep$-optimal stopping time for player $i$ in the game $\Gamma_i(t)$,
%and by $\psi_i(t,\ep)$ an $\ep$-optimal stopping time of player $i$ in the game $\Gamma_{3-i}(t)$;
%thus, the strategy $\psi_i(t,\ep)$ is a punishment strategy against player $3-i$,
%as it ensures that his payoff will not exceed $v_{3-i}(t) + \ep$.

The proof of Laraki and Solan (2005, Proposition 7) can be adapted
to show that the value process is right continuous (see Appendix \ref{appendix}).

\begin{lemma}
\label{lemma zero sum} The process $(v_i(t))_{t \geq 0}$ is right
continuous, for each $i \in \{1,2\}$.
\end{lemma}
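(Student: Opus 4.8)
The plan is to fix $i$ (say $i=1$; the case $i=2$ is symmetric via the representation (\ref{equ zero2})), fix a time $t$, take an arbitrary sequence $s_n\downarrow t$, and prove that $v_1(s_n)\to v_1(t)$ by establishing the two inequalities $\liminf_n v_1(s_n)\ge v_1(t)$ and $\limsup_n v_1(s_n)\le v_1(t)$ separately. Throughout I would exploit two structural facts. First, the payoff processes $X_1,Y_1,Z_1$ are right continuous and bounded between $0$ and $1$, so that acting slightly later than intended changes the integrand only by $o(1)$ and all passages to the limit can be justified by bounded convergence. Second, the filtration is right continuous, so $\mathcal{F}_t=\bigcap_n\mathcal{F}_{s_n}$ and, by the reverse (downward) martingale convergence theorem, $\E[\,\cdot\mid\mathcal{F}_{s_n}]\to\E[\,\cdot\mid\mathcal{F}_t]$ almost surely and in $L^1$ for every bounded random variable. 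This last tool is what links the $\mathcal{F}_{s_n}$-measurable quantities $v_1(s_n)$ to the $\mathcal{F}_t$-measurable quantity $v_1(t)$.

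For the lower bound, Lemma \ref{lemma8} and the discussion of $\varepsilon$-optimality provide, for each $\varepsilon>0$, an $\varepsilon$-optimal \emph{simple} randomized stopping time $\varphi_1\ge t$ for player $1$ in $\Gamma_1(t)$. I would feed the delayed strategy $\varphi_1\vee s_n$ (which is $\ge s_n$, hence admissible) into $\Gamma_1(s_n)$ and estimate the guaranteed payoff. Two features help: the minimizer in $\Gamma_1(s_n)$ is restricted to $\lambda_2\ge s_n$, a smaller class than in $\Gamma_1(t)$; and replacing $\varphi_1$ by $\varphi_1\vee s_n$ alters the outcome only on the shrinking window $\{t\le\varphi_1<s_n\}$, where right continuity of $X_1,Y_1,Z_1$ controls the change. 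Bounded convergence together with $\mathcal{F}_{s_n}\downarrow\mathcal{F}_t$ then yields $\liminf_n v_1(s_n)\ge v_1(t)-\varepsilon$, and letting $\varepsilon\downarrow 0$ completes this direction.

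For the upper bound I would run the mirror-image argument off the minimax representation (\ref{equ zero1b}): take an $\varepsilon$-optimal randomized stopping time $\psi_2\ge t$ for player $2$ in $\Gamma_1(t)$, delay it to $\psi_2\vee s_n$, and bound what player $1$ can obtain in $\Gamma_1(s_n)$ against it. The same right-continuity and bounded-convergence estimates, combined again with the convergence of the conditional expectations, give $\limsup_n v_1(s_n)\le v_1(t)+\varepsilon$, and hence $\le v_1(t)$. The two bounds give almost-sure convergence along $(s_n)$; since this holds for every sequence $s_n\downarrow t$, the right limit of $v_1$ at $t$ equals $v_1(t)$, and a standard version/separability argument upgrades this to right continuity of the paths of $(v_1(t))_{t\ge0}$.

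The step I expect to be the main obstacle is controlling the payoff contribution arising from the shrinking window $[t,s_n)$ — the event on which the delayed strategy, or the opponent's response, would have acted strictly inside $[t,s_n)$. Because a stopping time may carry an atom at its basis, the naive estimate $\Prob(\text{stop in }[t,s_n))\to 0$ need not hold; only the value at the basis $=t$ persists as $n\to\infty$, and there the forced action at $s_n$ is reconciled with the action at $t$ precisely by right continuity of the payoff processes, since $X_1(s_n)\to X_1(t)$ and likewise for $Y_1,Z_1$. Working with \emph{simple} $\varepsilon$-optimal strategies is also what keeps the simultaneous-stopping term $Z_1\,\one_{\{\varphi_1=\lambda_2\}}$ harmless, as these strategies put no mass on $\{\varphi_1=\lambda_2\}$ on $\{\delta>0\}$. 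Marrying this pathwise estimate to the conditioning on the decreasing $\sigma$-algebras $\mathcal{F}_{s_n}$ is exactly where the adaptation of Laraki and Solan (2005, Proposition 7) from a trivial to a general filtration demands the most care.
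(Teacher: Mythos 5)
Your high-level plan (two one-sided bounds, transport of near-optimal strategies from $\Gamma_1(t)$ to $\Gamma_1(s_n)$, right continuity of the payoffs plus backward martingale convergence) is in the spirit of what the paper invokes, but the step that carries all the weight fails as you wrote it. The paper itself gives no self-contained argument: it defers to Laraki and Solan (2005, Proposition 7), adapted to a general filtration via Lemma \ref{lemma new}. In your version, the transported strategy $\varphi_1\vee s_n$ (and likewise $\psi_2\vee s_n$) destroys exactly the feature you rely on: even when $\varphi_1$ is simple and atomless, $\varphi_1\vee s_n$ carries an atom at $s_n$ of randomization-mass $\Prob(\varphi_1\le s_n)$, and this mass does not vanish as $n\to\infty$ whenever $\varphi_1$ charges the point $t$ itself. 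Such atoms are unavoidable in this paper's generality: with constant payoffs $X_1=Z_1=1$, $Y_1=\xi_1=0$, every $\varepsilon$-optimal strategy of player 1 in $\Gamma_1(t)$ must put mass at least $1-\varepsilon$ on $\{t\}$, since any dithering can be preempted for $Y_1=0$. Against the moved atom, the minimizer in $\Gamma_1(s_n)$ stops exactly at $s_n$ and triggers $Z_1(s_n)$, whereas the same $\lambda_2$ against $\varphi_1$ triggered $X_1(\varphi_1)$: right continuity reconciles $Z_1(s_n)$ with $Z_1(t)$ and $X_1(s_n)$ with $X_1(t)$, but it cannot reconcile $Z_1$ with $X_1$, which are unrelated here (no condition such as $Z_1\in co\{X_1,Y_1\}$ is assumed). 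Your stated remedy — that simple strategies put no mass on the diagonal on $\{\delta>0\}$ — applies to $\varphi_1$, not to the strategy $\varphi_1\vee s_n$ that you actually play, and is silent precisely on $\{\delta=0\}$, which is where the problem lives. The correct comparison is not pathwise against the same $\lambda_2$: one must match the response $\lambda_2=s_n$ in $\Gamma_1(s_n)$ with the \emph{different} response $\lambda_2=t$ in $\Gamma_1(t)$ (atom against atom, so that $Z_1(s_n)$ is compared with $Z_1(t)$), and only the responses $\lambda_2>s_n$ with themselves; this uses the structure of where near-optimal strategies place atoms, which is supplied by the construction in Laraki and Solan (2005) and not by abstract $\varepsilon$-optimality alone.

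Two further steps are glossed over and are not routine. First, your guarantees are almost-sure bounds on $\E[\,\cdot\mid\mathcal{F}_t]$, while the quantities $v_1(s_n)$ are essential infima/suprema of $\E[\,\cdot\mid\mathcal{F}_{s_n}]$; an almost-sure bound on a conditional expectation does not survive refining the $\sigma$-algebra, and backward martingale convergence applies to a \emph{fixed} integrand, whereas here the worst-case response (hence the integrand) changes with $n$. One can recover an in-expectation bound $\E[v_1(s_n)\mid\mathcal{F}_t]\ge v_1(t)-\varepsilon-o(1)$ by a directedness argument, but converting that into the almost-sure convergence you assert requires an additional argument you do not give. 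Second, the closing ``version/separability'' step is not available: $v_1$ is pinned down at every $t$ by its definition (\ref{equ zero1}), so one cannot pass to a modification, and almost-sure right continuity at each fixed $t$ (with a null set depending on $t$) does not imply right continuity of the paths — yet path-level regularity is what the paper actually uses, since $v_i$ is evaluated at stopping times and over small random intervals in (\ref{equ 61}) and in the derivation of (\ref{equ martingale1}). These are exactly the points that the paper's route through Laraki and Solan (2005, Proposition 7), with the $\mathcal{F}_\lambda$-measurable delays of Lemma \ref{lemma new}, is designed to handle.
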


The following two lemmas provide crude bounds on the value process.

\begin{lemma}
\label{lemma vxy} For every $t \geq 0$ and each $i=1,2$ one has
\begin{equation*}
\min\{X_{i}(t),Y_{i}(t)\}\leq v_{i}(t)\leq \max \{X_{i}(t),Y_{i}(t)\}
\hbox{
on } \Omega.
\end{equation*}
\end{lemma}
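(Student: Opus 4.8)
The plan is to prove both inequalities for $i=1$; the case $i=2$ is symmetric. For the upper bound I would exhibit a strategy of the minimizer (player $2$) in $\Gamma_1(t)$, and for the lower bound a strategy of the maximizer (player $1$), in each case a \emph{simple} randomized stopping time with basis $t$ and a small constant delay $\delta>0$ (Definition \ref{def simple}). The crucial feature of such a stopping time is that on $\{\delta>0\}$ it is non-atomic, so that against \emph{any} non-randomized stopping time of the opponent the tie event has probability $0$; consequently the term involving $Z_1$ drops out of the payoff and only $X_1$ and $Y_1$ survive. This is precisely why the stated bounds do not involve $Z_1$.

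For the upper bound I would use the representation (\ref{equ zero1b}) of Lemma \ref{lemma8}, in which the minimizing player $2$ ranges over randomized stopping times. Fix $\delta>0$ and let $\psi _2(s,\cdot)=t+s\delta$. For an arbitrary non-randomized $\lambda _1\geq t$, let $P(\lambda _1)$ denote the inner integrand of (\ref{equ zero1b}), i.e.\ the average over $s\in[0,1]$ of the payoff produced by $\lambda _1$ against $\psi _2(s,\cdot)$. Splitting $\Omega$ into $\{\lambda _1=t\}$, $\{t<\lambda _1<t+\delta\}$ and $\{\lambda _1\geq t+\delta\}$: on the first event $P(\lambda _1)=X_1(t)$ (the tie at $s=0$ is Lebesgue-null); on the third event $P(\lambda _1)=\int_0^1 Y_1(t+s\delta)\,ds$; and on the middle event $P(\lambda _1)$ is a convex combination of $X_1(\lambda _1)$ and of values $Y_1(t+s\delta)$, all with argument in $[t,t+\delta]$. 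Setting $\overline X_\delta:=\sup_{0\le u\le\delta}X_1(t+u)$ and $\overline Y_\delta:=\sup_{0\le u\le\delta}Y_1(t+u)$ (both measurable, since $X_1,Y_1$ are right continuous) yields in all three cases the pointwise bound $P(\lambda _1)\leq\max\{\overline X_\delta,\overline Y_\delta\}$.

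Taking conditional expectation given $\mathcal{F}_t$, then the essential supremum over $\lambda _1$, and finally using $\psi _2$ in (\ref{equ zero1b}), gives $v_1(t)\leq{\mathbf{E}}[\max\{\overline X_\delta,\overline Y_\delta\}\mid\mathcal{F}_t]$ for every $\delta>0$. By right continuity $\overline X_\delta\downarrow X_1(t)$ and $\overline Y_\delta\downarrow Y_1(t)$ as $\delta\downarrow 0$, so letting $\delta\downarrow 0$ and invoking conditional monotone (or dominated, since the processes lie in $[0,1]$) convergence yields $v_1(t)\leq\max\{X_1(t),Y_1(t)\}$, because $X_1(t),Y_1(t)$ are $\mathcal{F}_t$-measurable. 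The lower bound is obtained in the mirror-image way from (\ref{equ zero1}): the maximizer plays $\varphi _1(r,\cdot)=t+r\delta$, one bounds the inner payoff \emph{below} by $\min\{\underline X_\delta,\underline Y_\delta\}$ with $\underline X_\delta:=\inf_{0\le u\le\delta}X_1(t+u)$ and $\underline Y_\delta:=\inf_{0\le u\le\delta}Y_1(t+u)$, and lets $\delta\downarrow 0$ using $\underline X_\delta\uparrow X_1(t)$ and $\underline Y_\delta\uparrow Y_1(t)$.

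The step I expect to require the most care is the middle event $\{t<\lambda _1<t+\delta\}$, where the opponent stops inside the delay window and the payoff picks up the value $X_1(\lambda _1)$ at a \emph{random} time near $t$. One cannot control this through pointwise right continuity uniformly in $\omega$, since the modulus of continuity is random. The remedy is exactly to dominate it by the measurable quantity $\overline X_\delta$ (respectively bound it below by $\underline X_\delta$) and to defer the limit $\delta\downarrow 0$ until \emph{after} taking expectations, so that right continuity enters only through the monotone convergence of $\overline X_\delta,\overline Y_\delta$ rather than being applied uniformly over $\omega$.
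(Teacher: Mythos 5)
Your proof is correct and takes essentially the same route as the paper's: both sides are obtained by having the appropriate player play the simple randomized stopping time $t+r\delta$ (in the representation (\ref{equ zero1}) for the lower bound and in the representation (\ref{equ zero1b}) of Lemma \ref{lemma8} for the upper bound), using its non-atomicity to kill the tie/$Z_i$ term and right continuity to relate the resulting payoff to the time-$t$ values. The only difference is bookkeeping: the paper controls the oscillation on $[t,t+\delta]$ through probability bounds as in (\ref{equ 55a})--(\ref{equ 56a}) and then lets $\varepsilon\to 0$, whereas you dominate the payoff by the running sup/inf envelopes over the window and let $\delta\downarrow 0$ via monotone convergence --- an equally valid, arguably tidier, way to finish.
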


\begin{proof}
We start by proving the left-hand side inequality for $i=2$. Let $%
\varepsilon >0$ be arbitrary, and let $\delta > 0$ be sufficiently small
such that
\begin{eqnarray}
{\mathbf{P}}(\sup_{\rho\in [0,\delta] }|X_{2}(t )-X_{2}(t+\rho)|>\varepsilon) \leq \varepsilon,
\label{equ 55a} \\
{\mathbf{P}}(\sup_{\rho\in [0,\delta] }|Y_{2}(t )-Y_{2}(t+\rho)|>\varepsilon) \leq \varepsilon.
\label{equ 56a}
\end{eqnarray}%
Such $\delta$ exists because the processes $X_2$ and $Y_2$ are right
continuous.

Let $\varphi _{2}$ be the simple randomized stopping time $\varphi
_{2}(r,\cdot )=t+r\delta $, and let $\lambda _{1}\geq t$ be any
non-randomized stopping time for player 1. The definition of $\varphi _{2}$
implies that the probability that $\lambda _{1}=\varphi _{2}$ is 0: ${%
\mathbf{P}}(\lambda _{1}=\varphi _{2})=0$. Moreover, $\varphi _{2}<\infty $.
Therefore
\begin{equation*}
\gamma _{2}(\lambda _{1},\varphi _{2}\mid \mathcal{F}_{t})={\mathbf{E}}%
[X_{2}(\lambda _{1})\mathbf{1}_{\{\lambda _{1}<\varphi _{2}\}}+Y_{2}(\varphi
_{2})\mathbf{1}_{\{\varphi _{2}<\lambda _{1}\}}\mid \mathcal{F}_{t}].
\end{equation*}%
By (\ref{equ 55a}) and (\ref{equ 56a}) this implies that
\begin{equation*}
{\mathbf{P}}(\gamma _{2}(\lambda _{1},\varphi _{2}\mid \mathcal{F}_{t})<\min
\{X_{2}(t),Y_{2}(t)\}-\varepsilon )\leq 2\varepsilon .
\end{equation*}%
Because $\lambda _{1}$ is arbitrary, Eq. (\ref{equ zero2}) implies that
\begin{equation*}
{\mathbf{P}}(v_{2}(t)<\min \{X_{2}(t),Y_{2}(t)\}-\varepsilon )\leq
2\varepsilon .
\end{equation*}%
The left-hand side inequality for $i=2$ follows because $\varepsilon $ is
arbitrary.

The proof of the right-hand side-inequality for $i=2$ follows the same
arguments, by using the simple randomized stopping time $\varphi_1(r,\cdot)
= t+r\delta$. Indeed, for every stopping time $\lambda_2$ for player 2 we
then have
\begin{equation*}
\gamma _{2}(\varphi _{1},\lambda _{2}\mid \mathcal{F}_t )= {\mathbf{E}}%
[X_{2}(\varphi _{1})\mathbf{1}_{\{\varphi _{1}<\lambda _{2}\}}
+Y_{2}(\lambda _{2})\mathbf{1}_{\{\varphi _{1}>\lambda _{2}\}}\mid \mathcal{F%
}_t].
\end{equation*}%
The same argument as above, using (\ref{equ zero2b}), delivers the desired
inequality. The proof for $i=1$ is analogous.
\end{proof}

\begin{lemma}
\label{lemma vxy2} For every $t\geq 0$, one has
\begin{eqnarray*}
v_{1}(t) &\leq &\max \{Y_{1}(t),Z_{1}(t)\}\hbox{ on }\Omega , \\
v_{2}(t) &\leq &\max \{X_{2}(t),Z_{2}(t)\}\hbox{ on }\Omega .
\end{eqnarray*}
\end{lemma}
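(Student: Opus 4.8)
The plan is to bound $v_1(t)$ from above by invoking the min-max (dual) representation of the value given in Lemma~\ref{lemma8}, Eq.~(\ref{equ zero1b}), in which player~2 is the minimizer of player~1's payoff. The guiding idea is that the minimizer can force a low outcome by pre-empting at the earliest admissible instant. Concretely, I would let player~2 use the degenerate randomized stopping time $\psi_2 \equiv t$; this is a legitimate (non-randomized) stopping time with $\psi_2 \geq t$, hence admissible in the essential infimum, so $v_1(t)$ is at most the essential supremum over non-randomized $\lambda_1 \geq t$ of the conditional payoff obtained from this choice.

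First I would substitute $\psi_2 = t$ into (\ref{equ zero1b}) and inspect the four events. Since every competing $\lambda_1 \geq t = \psi_2$, the event $\{\lambda_1 < \psi_2\}$ is empty and the $X_1$ term drops out; and since $\psi_2 = t < \infty$, the event $\{\lambda_1 = \psi_2 = \infty\}$ is empty, so the $\xi_1$ term drops out as well. What survives is $Y_1(t)$ on $\{\lambda_1 > t\}$ (player~2 stops strictly first) and $Z_1(t)$ on $\{\lambda_1 = t\}$ (simultaneous stopping). Thus the integrand reduces to $Y_1(t)\mathbf{1}_{\{\lambda_1 > t\}} + Z_1(t)\mathbf{1}_{\{\lambda_1 = t\}}$, which is pointwise at most $\max\{Y_1(t), Z_1(t)\}$.

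To finish, I would observe that $\{\lambda_1 = t\} = \{\lambda_1 \leq t\} \in \mathcal{F}_t$ and $\{\lambda_1 > t\} \in \mathcal{F}_t$ (as $\lambda_1$ is a stopping time with $\lambda_1 \geq t$), while $Y_1(t)$ and $Z_1(t)$ are $\mathcal{F}_t$-measurable. Hence the integrand is itself $\mathcal{F}_t$-measurable, the conditional expectation in (\ref{equ zero1b}) equals it, and it is bounded by $\max\{Y_1(t), Z_1(t)\}$. Taking the essential supremum over $\lambda_1 \geq t$ preserves this bound, since $\max\{Y_1(t),Z_1(t)\}$ does not depend on $\lambda_1$, and as $\psi_2 \equiv t$ is merely one admissible choice for the outer infimum we conclude $v_1(t) \leq \max\{Y_1(t), Z_1(t)\}$ on $\Omega$. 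The second inequality is entirely symmetric: I would use (\ref{equ zero2b}), where player~1 is the minimizer of player~2's payoff, let player~1 stop at $\psi_1 \equiv t$, and note that the surviving terms are $X_2(t)$ on $\{\lambda_2 > t\}$ (player~1 stops first, yielding the $X_2$-payoff) and $Z_2(t)$ on $\{\lambda_2 = t\}$.

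Since the argument is essentially a single substitution, I do not expect a genuine obstacle; the only points requiring care are verifying that the degenerate stopping time $\psi_2 \equiv t$ is admissible in the infimum, correctly identifying which of the four terms vanish under this choice, and the measurability bookkeeping that lets the conditional expectation be removed. Conceptually, the one thing worth flagging is not to confuse the roles of the payoff processes: it is precisely because the minimizer pre-empts at the earliest time $t$ that the maximizer's ``stop-first'' payoff ($X_1$ for player~1, $Y_2$ for player~2) is eliminated, which is exactly what sharpens Lemma~\ref{lemma vxy} from $\max\{X_i(t),Y_i(t)\}$ to the bounds stated here.
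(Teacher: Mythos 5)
Your proposal is correct and follows essentially the same route as the paper: in both cases one plugs the minimizer's ``stop immediately'' strategy $\psi_2 \equiv t$ (resp.\ $\psi_1 \equiv t$) into the dual representation (\ref{equ zero1b}) (resp.\ (\ref{equ zero2b})) of Lemma \ref{lemma8}, and observes that against any $\lambda_1 \geq t$ the conditional payoff is $Y_1(t)$ or $Z_1(t)$ (resp.\ $X_2(t)$ or $Z_2(t)$), hence bounded by the stated maximum. Your additional measurability bookkeeping is accurate but not needed beyond what the paper's two-line argument already implies.
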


\begin{proof}
We prove the Lemma for $i=1$. Let $\psi_2 = t$: player 2 stops at time $t$.
By (\ref{equ zero1b}),
\begin{equation*}
v_1(t) \leq {\mathrm{ess-sup}}_{\lambda_{1}\geq t}\gamma_1(\lambda_1,\psi_2
\mid \mathcal{F}_t).
\end{equation*}
Because for every (non-randomized) stopping time $\lambda_1$ for player 1, $%
\gamma_1(\lambda_1,\psi_2 \mid \mathcal{F}_t)$ is either $Y_1(t)$ (if $%
\lambda_1 > t$) or $Z_1(t)$ (if $\lambda_1 = t$), the result follows.
\end{proof}

\bigskip

Following Lepeltier and Maingueneau (1984), for every $\eta >0$ let $\mu
_{1}^{\eta }$ and $\mu _{2}^{\eta }$ be the stopping times defined as
follows:
\begin{equation}
\mu _{1}^{\eta }:=\inf \{s\geq 0\colon X_{1}(s)\geq v_{1}(s)-\eta \},
\label{equ mu1}
\end{equation}%
and
\begin{equation}
\mu _{2}^{\eta }:=\inf \{s\geq 0\colon Y_{2}(s)\geq v_{2}(s)-\eta \}.
\label{equ mu2}
\end{equation}

As the following example shows, the stopping times $\mu_{1}^{\eta }$ and $%
\mu _{2}^{\eta }$ may be infinite. Consider the following Dynkin game, where
the payoffs are constants: $X_1 = 0$, $Y_1 = Z_1 = 2$ and $\xi_1 = 1$. Then $%
v_1(t) = 1$ for every $t$, and $\mu^\eta_1 = \infty$, provided $\eta \in
(0,1)$.

Observe that $\mu _{2}^{\eta }\leq \mu _{2}^{\eta ^{\prime }}$ whenever $%
\eta >\eta ^{\prime }$. Moreover, because the processes $X_{1}$, $Y_{2}$, $%
v_{1}$ and $v_{2}$ are right continuous, we have
\begin{equation}
X_{1}(\mu _{1}^{\eta })\geq v_{1}(\mu _{1}^{\eta })-\eta ,
\label{equ martingale1}
\end{equation}%
and
\begin{equation}
Y_{2}(\mu _{2}^{\eta })\geq v_{2}(\mu _{2}^{\eta })-\eta .
\label{equ martingale1a}
\end{equation}%
For every $t < \mu^\eta_1$, by the definition of $\mu^\eta_1$ and  Lemma \ref{lemma vxy},
we have
\[ X_1(t) < v_1(t) - \eta < v_1(t) \leq \max\{X_1(t),Y_1(t)\},\]
and therefore
\begin{equation}  \label{equ martingale2}
Y_{1}(t)> X_{1}(t),\ \ \ \forall t<\mu _{1}^{\eta }.
\end{equation}%
The analogous
inequality for player 2 holds as well.

\begin{lemma}
\label{lemma new1} Let $\varepsilon,\eta > 0$, let $\tau$ be a stopping
time, and let $A \in \mathcal{F}_{\tau}$ satisfy ${\mathbf{P}}(A \setminus
\{\mu^\eta_1 = \infty\}) < \varepsilon$. Then
\begin{equation}  \label{equ 501}
{\mathbf{E}}[v_1(\tau) \mathbf{1}_A] \leq {\mathbf{E}}[\xi_1 \mathbf{1}%
_{A\cap \{\mu^\eta_1 = \infty\}}] + 3\varepsilon + 6\varepsilon/\eta.
\end{equation}
\end{lemma}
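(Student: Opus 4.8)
The plan is to read the event $\{\mu_1^\eta=\infty\}$ as the statement that player~$1$ never has an $\eta$-profitable time to stop in the zero-sum game $\Gamma_1$: on this event $X_1(s)<v_1(s)-\eta$ for every $s$, so by Lemma~\ref{lemma vxy} together with \eqref{equ martingale2} we also have $X_1(s)<v_1(s)\le Y_1(s)$ throughout. Intuitively, player~$1$'s value can then only come from \emph{continuation}, and the minimizer (player~$2$) can hold player~$1$ down to the no-stop payoff $\xi_1$ by never stopping. Since $A\in\mathcal F_\tau$, $0\le v_1\le 1$, and $\mathbf P(A\setminus\{\mu_1^\eta=\infty\})<\varepsilon$, every quantity evaluated off $\{\mu_1^\eta=\infty\}$ contributes at most $\varepsilon$ to the relevant expectation; so the whole problem reduces to comparing $v_1(\tau)$ with a terminal value on $A\cap\{\mu_1^\eta=\infty\}$, and the various $\varepsilon$'s in \eqref{equ 501} are bookkeeping for these off-event truncations.

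The key structural input I would establish is that $v_1$ is a \emph{submartingale} on the maximizer's continuation region $[0,\mu_1^\eta)$; equivalently, the stopped process $\big(v_1(t\wedge\mu_1^\eta)\big)_{t\ge 0}$ is a bounded submartingale. This is where the zero-sum results enter: on $\{t<\mu_1^\eta\}$ stopping yields player~$1$ only $X_1(t)<v_1(t)-\eta$, while $v_1(t)\le Y_1(t)$ by Lemma~\ref{lemma vxy}, so in the representation of Lemma~\ref{lemma8} the minimizer's best reply does not stop before $\mu_1^\eta$; pushing the starting time forward by a conditional-expectation (dynamic-programming) argument then gives $v_1(t)\le\mathbf E[v_1(s)\mid\mathcal F_t]$ for stopping times $t\le s\le\mu_1^\eta$. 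Granting this, I would apply Doob's optional sampling to $v_1(\cdot\wedge\mu_1^\eta)$ between $\tau$ and $+\infty$: for $A\in\mathcal F_\tau$ one gets $\mathbf E[v_1(\tau\wedge\mu_1^\eta)\mathbf 1_A]\le\mathbf E[L\,\mathbf 1_A]$, where $L:=\lim_{t}v_1(t\wedge\mu_1^\eta)$ exists a.s.\ by bounded-submartingale convergence. Using $0\le v_1\le1$ and $\mathbf P(A\setminus\{\mu_1^\eta=\infty\})<\varepsilon$ to replace $v_1(\tau\wedge\mu_1^\eta)$ by $v_1(\tau)$ and to discard the contribution of $\{\mu_1^\eta<\infty\}$, each at cost $\le\varepsilon$, this reduces \eqref{equ 501} to the terminal estimate $\mathbf E[L\,\mathbf 1_{A\cap\{\mu_1^\eta=\infty\}}]\le\mathbf E[\xi_1\mathbf 1_{A\cap\{\mu_1^\eta=\infty\}}]+\varepsilon+6\varepsilon/\eta$.

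The main obstacle is this last step: showing $L\le\xi_1$ (up to the error) on $\{\mu_1^\eta=\infty\}$. The point is that as $t\to\infty$ player~$1$ still has no $\eta$-profitable stop, so the minimizer can drive the continuation payoff down to $\xi_1$ by eventually never stopping. To make this quantitative \emph{without} invoking a Snell envelope, I would fix a large time $T$, let the minimizer play ``never stop after $T$'', and estimate player~$1$'s best reply: any finite stop at $\lambda_1\ge T$ on $\{\mu_1^\eta=\infty\}$ earns only $X_1(\lambda_1)<v_1(\lambda_1)-\eta$, a \emph{deficit} of at least $\eta$ relative to the current value. A Markov inequality then bounds the probability that such a profitable-looking stop survives by (the expected value-deficit accumulated on $A\setminus\{\mu_1^\eta=\infty\}$) divided by $\eta$, which is $\le 6\varepsilon/\eta$, while the remaining mass contributes the no-stop payoff $\xi_1$; the leftover truncations at $v_1\le1$ and $\xi_1\le1$ supply the additive $\varepsilon$'s, and collecting constants yields $3\varepsilon+6\varepsilon/\eta$. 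I expect the two delicate points to be (i) the rigorous continuous-time justification of the submartingale property from Lemma~\ref{lemma8}, and (ii) the clean extraction of the factor $1/\eta$ in the terminal estimate, where the boundedness of $v_1$ and the defining gap $X_1<v_1-\eta$ must be combined with the optional-sampling identity for the stopped value process.
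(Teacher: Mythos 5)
There is a genuine gap, and it sits exactly where you flag ``the main obstacle.'' The crux of Eq.~(\ref{equ 501}) is showing that a near-optimal reply of player 1 to the strategy ``player 2 never stops'' stops in finite time with probability at most $6\varepsilon/\eta$; your ``Markov inequality'' step asserts this but cannot deliver it. First, the deficit is accumulated on the event where the reply stops in finite time, which lies (up to probability $\varepsilon$) \emph{inside} $A\cap\{\mu^\eta_1=\infty\}$, not on $A\setminus\{\mu^\eta_1=\infty\}$; second, and more importantly, the statement that the total accumulated deficit is of order $\varepsilon$ is precisely what has to be proved --- it is not free. The missing idea is a \emph{re-optimization comparison}: take $\lambda_1\geq\tau$ achieving the essential supremum in Eq.~(\ref{equ zero1b}) (with $\psi_2=\infty$) up to $\varepsilon$, and compare it with the modified reply $\lambda'_1$ which, on $\{\lambda_1<\infty\}$, cancels the stop and instead follows an $(\eta/2)$-optimal strategy in $\Gamma_1(\lambda_1)$. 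That strategy guarantees $v_1(\lambda_1)-\eta/2$ against \emph{every} strategy of player 2, in particular against ``never stop,'' whereas stopping yields only $X_1(\lambda_1)<v_1(\lambda_1)-\eta$ there; so the switch gains at least $\eta/2$ on $\{\lambda_1<\infty\}$, and the $\varepsilon$-optimality of $\lambda_1$ then forces $\tfrac{\eta}{2}\mathbf{P}(A\cap\{\lambda_1<\infty\})\leq 3\varepsilon$, which is where $6\varepsilon/\eta$ comes from. Your proposal never makes this comparison, and no submartingale machinery can substitute for it: the submartingale property only bounds $v_1$ from above by its own \emph{later} values, so it can never convert the pointwise gap $X_1<v_1-\eta$ into a bound on the stopping probability of a near-optimal reply.

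Relatedly, the submartingale/optional-sampling reduction you build the proof around is a detour that buys nothing. Even granting that $(v_1(t\wedge\mu^\eta_1))_{t\geq 0}$ is a bounded submartingale (the paper quotes this from Lepeltier and Maingueneau as Lemma \ref{Lemma submartingale}, stated \emph{after} and independently of Lemma \ref{lemma new1}; the two are combined only later, in Lemma \ref{lemma exp v}), your reduction ends at the terminal estimate $\mathbf{E}[L\,\mathbf{1}_{A\cap\{\mu^\eta_1=\infty\}}]\leq\mathbf{E}[\xi_1\mathbf{1}_{A\cap\{\mu^\eta_1=\infty\}}]+\varepsilon+6\varepsilon/\eta$. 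But $L=\lim_t v_1(t)$ is no more tractable than $v_1(\tau)$ itself --- the payoff processes need not converge as $t\to\infty$ --- and this terminal estimate is just Lemma \ref{lemma new1} again, at large deterministic times; proving it requires exactly the re-optimization argument above, so the reduction is circular. (There is also a fixable technical slip: optional sampling at $\tau\wedge\mu^\eta_1$ needs $A\in\mathcal{F}_{\tau\wedge\mu^\eta_1}$, while you only have $A\in\mathcal{F}_\tau$; one must first pass to $A\cap\{\tau\leq\mu^\eta_1\}$.) Once the comparison step is in place, the paper's proof shows the whole lemma can be run directly at the stopping time $\tau$, with no submartingale, no optional sampling, and no limit $L$.
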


\begin{proof}
Let $\psi_2 = \infty$: player 2 never stops. By (\ref{equ zero1b}),
\begin{equation}  \label{equ 400}
v_1(\tau) \leq {\mathrm{ess-sup}}_{ \lambda_{1}\geq
\tau}\gamma_1(\lambda_1,\psi_2 \mid \mathcal{F}_\tau).
\end{equation}
Let $\lambda_1 \geq \tau$ be a stopping time for player 1 that achieves the
supremum in (\ref{equ 400}) up to $\varepsilon$. Let $\lambda^{\prime }_1$
be the following stopping time:

\begin{itemize}
\item On $A \cap \{\lambda_1 < \infty\}$, $\lambda^{\prime }_1$ is an $%
\eta/2 $-optimal stopping time for player 1 in $\Gamma_1(\lambda_1)$.

\item On $A \cap \{\lambda_1 = \infty\}$, $\lambda^{\prime }_1 = \infty$.
\end{itemize}

It follows that
\begin{eqnarray*}
{\mathbf{E}}[v_{1}(\tau )\mathbf{1}_{A}] &\leq &{\mathbf{E}}[\gamma
_{1}(\lambda _{1},\psi _{2}\mid \mathcal{F}_{\tau })\mathbf{1}%
_{A}]+\varepsilon   \notag \\
&=&{\mathbf{E}}[X_{1}(\lambda _{1})\mathbf{1}_{A\cap\{\lambda _{1}<\infty \}}+\xi _{1}\mathbf{1}_{A\cap\{\lambda _{1}=\infty \}}]+\varepsilon  \\
&<&{\mathbf{E}}[(v_{1}(\lambda _{1})-\eta )\mathbf{1}_{A\cap\{\lambda _{1}<\mu
_{1}^{\eta }=\infty \}}+X_{1}(\lambda _{1})\mathbf{1}_{A\cap\{\lambda
_{1}<\infty \}\cap \{\mu _{1}^{\eta }<\infty \}}+\xi _{1}\mathbf{1}%
_{A\cap\{\lambda _{1}=\infty \}}]+\varepsilon   \notag \\
&\leq &{\mathbf{E}}[(v_{1}(\lambda _{1})-\eta )\mathbf{1}_{A\cap\{\lambda
_{1}<\infty \}}+\xi _{1}\mathbf{1}_{A\cap\{\lambda _{1}=\infty \}}]+3\varepsilon  \\
&\leq &{\mathbf{E}}[\gamma _{1}(\lambda _{1}^{\prime },\psi _{2}\mid
\mathcal{F}_{\tau })\mathbf{1}_{A}]-\frac{\eta }{2}{\mathbf{E}}[\mathbf{1}%
_{A\cap\{\lambda _{1}<\infty \}}]+3\varepsilon \\
&\leq &{\mathbf{E}}[\gamma _{1}(\lambda _{1},\psi _{2}\mid \mathcal{F}_{\tau
})\mathbf{1}_{A}]-\frac{\eta }{2}{\mathbf{E}}[\mathbf{1}_{A\cap\{\lambda
_{1}<\infty \}}]+4\varepsilon
\end{eqnarray*}%
where the second inequality holds by the definition of $\mu _{1}^{\eta }$,
the third inequality holds since ${\mathbf{P}}(A\setminus \{\mu _{1}^{\eta
}=\infty \})<\varepsilon $ and since payoffs are bounded by 1, and the last
inequality holds because $\lambda _{1}$ is $\varepsilon $-optimal.

This sequence of inequalities implies that
\begin{equation*}
{\mathbf{P}}(A \cap \{\lambda_1 < \infty\}) \leq 6\varepsilon/\eta,
\end{equation*}
and therefore
\begin{equation*}
{\mathbf{E}}[v_{1}(\tau )\mathbf{1}_{A}] \leq {\mathbf{E}}[\xi_1 \mathbf{1}%
_{A\cap \{\mu^\eta_1 = \infty\}}] + 3\varepsilon + 6\varepsilon/\eta,
\end{equation*}
as desired.
\end{proof}

By Lepeltier and Maingueneau (1984), for each $i=1,2$ the process $v_i$ is a
submartingale up to time $\mu^\eta_i$.

\begin{lemma}
\label{Lemma submartingale} For every $\eta > 0$ the process $%
(v_{1}(t))_{t=0}^{\mu _{1}^{\eta}}$ is a submartingale: for every pair of
finite stopping times $\lambda<\lambda^{\prime }\leq \mu _{1}^{\eta}$ one has $%
v_{1}(\lambda)\leq {\mathbf{E}}[v_{1}(\lambda^{\prime })\mid \mathcal{F}%
_{\lambda}]$ on $\Omega$.
\end{lemma}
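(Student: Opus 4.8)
The plan is to deduce the stopping-time statement from the structural fact recalled just above — that, by Lepeltier and Maingueneau (1984), the stopped process $(v_1(t\wedge\mu_1^\eta))_{t\ge 0}$ is an $\mathcal{F}$-submartingale at \emph{deterministic} times — and to upgrade this to an inequality between conditional expectations at stopping times. This upgrade is precisely what Doob's optional sampling theorem delivers, so the work reduces to checking its hypotheses and then reading off the claim.

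First I would record the three ingredients that make optional sampling available: $v_1$ takes values in $[0,1]$ (the payoffs were normalized to lie between $0$ and $1$), hence it is uniformly bounded and a fortiori uniformly integrable, which removes any integrability obstruction; and by Lemma \ref{lemma zero sum} it is right continuous, so the stopped submartingale is càdlàg and optional sampling applies to it at arbitrary stopping times. Given finite stopping times $\lambda<\lambda'\le\mu_1^\eta$, applying optional sampling to $(v_1(t\wedge\mu_1^\eta))_t$ at $\lambda$ and $\lambda'$ yields
\[
v_1(\lambda\wedge\mu_1^\eta)\le \mathbf{E}\big[\,v_1(\lambda'\wedge\mu_1^\eta)\mid\mathcal{F}_\lambda\,\big]\quad\text{on }\Omega .
\]
Since $\lambda\le\lambda'\le\mu_1^\eta$ forces $\lambda\wedge\mu_1^\eta=\lambda$ and $\lambda'\wedge\mu_1^\eta=\lambda'$, this is exactly $v_1(\lambda)\le\mathbf{E}[v_1(\lambda')\mid\mathcal{F}_\lambda]$, as desired.

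The genuine content sits entirely in the structural input, and it is worth noting \emph{why} the submartingale property holds, since that is where the definition of $\mu_1^\eta$ enters. On $\{t<\mu_1^\eta\}$ one has $X_1(t)<v_1(t)-\eta$, so the maximizer (player 1) strictly prefers not to stop; in this region the game collapses to an optimal stopping problem for the minimizer, whose value is a lower Snell envelope and hence a submartingale. I expect the main obstacle to be that a fully self-contained derivation from the dynamic programming principle is circular: bounding the maximizer's early-stopping payoff $X_1(\mu_1)$ on $\{\mu_1<\lambda'\}$ already uses $X_1(\mu_1)<v_1(\mu_1)$ together with $v_1(\mu_1)\le\mathbf{E}[v_1(\lambda')\mid\mathcal{F}_{\mu_1}]$ — the very inequality being proved. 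This is precisely the point at which one appeals to the penalization/fixed-point construction of Lepeltier and Maingueneau (1984) rather than re-deriving it, so in the present lemma the task is the careful but routine verification of the optional-sampling hypotheses and the localization identity $\lambda'\wedge\mu_1^\eta=\lambda'$, with the difficulty outsourced to the cited result.
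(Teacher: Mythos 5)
Your proposal is correct and takes essentially the same route as the paper, which gives no proof of this lemma at all: it is stated as a direct quotation of Lepeltier and Maingueneau (1984), preceded only by the sentence attributing the submartingale property of $v_i$ up to $\mu_i^\eta$ to that reference. Your optional-sampling bridge (using that $v_1$ is $[0,1]$-valued and right continuous by Lemma \ref{lemma zero sum}, so Doob's theorem applies to the stopped process at finite stopping times) is a sound and slightly more careful filling-in of what the paper leaves entirely to the citation.
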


Lemma \ref{Lemma submartingale} implies that before time $\sup_{\eta >0}\mu
_{1}^{\eta }$ player $1$ is better off waiting and not stopping. An analogue
statement holds for player 2.

Lemmas \ref{lemma new1} and \ref{Lemma submartingale} deliver the following
result.

\begin{lemma}
\label{lemma exp v} Let $\eta > 0$. For every stopping time $\lambda_1$ that
satisfies $\lambda_1 \leq \mu^\eta_1$ one has
\begin{equation*}
v_1(\lambda_1) \leq {\mathbf{E}}[v_1(\mu^\eta_1)\mathbf{1}_{\{\mu^\eta_1 <
\infty\}} + \xi_1\mathbf{1}_{\{\mu^\eta_1 =\infty\}}\mid\mathcal{F}%
_{\lambda_1}].
\end{equation*}
\end{lemma}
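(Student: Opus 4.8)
The plan is to read the statement as an optional-sampling theorem for the submartingale $v_1$ stopped at $\mu_1^\eta$, the only novelty being the terminal value: on $\{\mu_1^\eta<\infty\}$ it is the genuine value $v_1(\mu_1^\eta)$, whereas on $\{\mu_1^\eta=\infty\}$ the submartingale of Lemma \ref{Lemma submartingale} has no last element and must be replaced by $\xi_1$. So the proof simply glues Lemma \ref{Lemma submartingale} (the submartingale inequality up to $\mu_1^\eta$) to Lemma \ref{lemma new1} (which identifies the terminal behaviour with $\xi_1$ on $\{\mu_1^\eta=\infty\}$). First I would reduce the conditional inequality to an integrated one: since $v_1(\lambda_1)$ and the right-hand side are both $\mathcal{F}_{\lambda_1}$-measurable, it suffices to show that for every $A\in\mathcal{F}_{\lambda_1}$,
\[ {\mathbf{E}}[v_1(\lambda_1)\mathbf{1}_A]\le {\mathbf{E}}[v_1(\mu_1^\eta)\mathbf{1}_{A\cap\{\mu_1^\eta<\infty\}}]+{\mathbf{E}}[\xi_1\mathbf{1}_{A\cap\{\mu_1^\eta=\infty\}}], \]
since testing against $A=\{v_1(\lambda_1)>{\mathbf{E}}[\,\cdots\mid\mathcal{F}_{\lambda_1}]\}$ then forces that set to be null. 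Note that one cannot split $A$ along $\{\mu_1^\eta<\infty\}$, as this event need not lie in $\mathcal{F}_{\lambda_1}$; I would therefore keep the integrated form throughout.

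Next I would truncate. Fix $n$ and set $A_n:=A\cap\{\lambda_1\le n\}$, which lies in $\mathcal{F}_{\lambda_1\wedge n}$. Applying the integrated form of Lemma \ref{Lemma submartingale} to the finite stopping times $\lambda_1\wedge n\le\mu_1^\eta\wedge n\le\mu_1^\eta$ over $A_n$, and splitting the terminal value according to whether $\mu_1^\eta\le n$ or $\mu_1^\eta>n$, yields
\[ {\mathbf{E}}[v_1(\lambda_1)\mathbf{1}_{A_n}]\le {\mathbf{E}}[v_1(\mu_1^\eta)\mathbf{1}_{A_n\cap\{\mu_1^\eta\le n\}}]+{\mathbf{E}}[v_1(n)\mathbf{1}_{A_n\cap\{\mu_1^\eta>n\}}]. \]
The tail term is where Lemma \ref{lemma new1} enters: the set $A_n\cap\{\mu_1^\eta>n\}$ belongs to $\mathcal{F}_n$ and differs from a subset of $\{\mu_1^\eta=\infty\}$ only by $\{n<\mu_1^\eta<\infty\}$, whose probability tends to $0$. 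Hence, applying Lemma \ref{lemma new1} with $\tau=n$ and a vanishing $\varepsilon=\varepsilon_n$, the tail term is bounded by ${\mathbf{E}}[\xi_1\mathbf{1}_{A_n\cap\{\mu_1^\eta=\infty\}}]+3\varepsilon_n+6\varepsilon_n/\eta$, with error tending to $0$ for fixed $\eta$.

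Finally I would let $n\to\infty$. Using boundedness of the payoffs and monotone convergence along $A_n\uparrow A\cap\{\lambda_1<\infty\}$, and recalling that $\lambda_1\le\mu_1^\eta$ forces $\{\mu_1^\eta<\infty\}\subseteq\{\lambda_1<\infty\}$, this delivers the integrated inequality with every set intersected with $\{\lambda_1<\infty\}$. The remaining contribution from $\{\lambda_1=\infty\}$, on which $\mu_1^\eta=\infty$ and the target reduces to $\xi_1$, is handled by one more application of Lemma \ref{lemma new1}, now with $\tau=\lambda_1$ and the set $A\cap\{\lambda_1=\infty\}$ (for which ${\mathbf{P}}(\,\cdot\setminus\{\mu_1^\eta=\infty\})=0$), giving ${\mathbf{E}}[v_1(\lambda_1)\mathbf{1}_{A\cap\{\lambda_1=\infty\}}]\le{\mathbf{E}}[\xi_1\mathbf{1}_{A\cap\{\lambda_1=\infty\}}]$. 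Adding the two contributions completes the integrated inequality, and hence the lemma.

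I expect the main obstacle to be the bookkeeping around $\{\mu_1^\eta=\infty\}$: the submartingale of Lemma \ref{Lemma submartingale} has no terminal element there, so the naive optional-sampling limit $\lim_n v_1(n)$ need not exist or equal the correct quantity. The whole point of the lemma is that this limiting tail mass is correctly accounted for by $\xi_1$, and the only tool that licenses this replacement is Lemma \ref{lemma new1}; the delicate step is matching its error term $6\varepsilon/\eta$ (harmless for fixed $\eta$) against the truncation level, together with the repeated care that the events $\{\mu_1^\eta<\infty\}$ and $\{\lambda_1=\infty\}$ are not $\mathcal{F}_{\lambda_1}$-measurable and must be treated only inside expectations.
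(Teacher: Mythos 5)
Your argument is correct and is essentially the paper's own proof: both truncate time, apply Lemma \ref{Lemma submartingale} up to $\min\{\mu_1^\eta,t\}$, and invoke Lemma \ref{lemma new1} at the truncation time $\tau=t$ with $A=\{t\le\mu_1^\eta\}$ to replace the tail term $v_1(t)$ by $\xi_1$ up to an error $3\varepsilon+6\varepsilon/\eta$, which is then sent to zero. The only differences are bookkeeping: the paper fixes a single $t_0$ with ${\mathbf{P}}(t_0\le\mu_1^\eta<\infty)<\varepsilon$ instead of your sequence $n\to\infty$ with $\varepsilon_n\to 0$, and it is looser than you about the reduction to the conditional inequality on $\mathcal{F}_{\lambda_1}$ and about a possibly infinite $\lambda_1$, points your version treats explicitly.
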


\begin{proof}
Let $\varepsilon> 0$ be arbitrary. By Lemma \ref{Lemma submartingale}, for
every $t \geq 0$ one has
\begin{equation*}
v_1(\lambda_1) \leq {\mathbf{E}}[v_1(\min\{\mu^\eta_1,t\})].
\end{equation*}
Let $t_0$ be sufficiently large such that ${\mathbf{P}}(t_0 \leq \mu^\eta_1
< \infty) < \varepsilon$. By Lemma \ref{lemma new1} with $\tau = t_0$ and $A
= \{t_0 \leq \mu^\eta_1\}$,
\begin{equation*}
{\mathbf{E}}[v_1(t_0) \mathbf{1}_{\{t_0 \leq \mu^\eta_1\}}] \leq {\mathbf{E}}%
[\xi_1 \mathbf{1}_{\{t_0 \leq \mu^\eta_1 = \infty\}}] + 3\varepsilon +
6\varepsilon/\eta.
\end{equation*}
Therefore,
\begin{eqnarray*}
v_1(\lambda_1) &\leq& {\mathbf{E}}[v_1(\min\{\mu^\eta_1,t_0\})] \\
&=& {\mathbf{E}}[v_1(\mu^\eta_1) \mathbf{1}_{\{\mu^\eta_1 < t_0\}} +
v_1(t_0) \mathbf{1}_{\{t_0 \leq \mu^\eta_1\}}] \\
&\leq& {\mathbf{E}}[v_1(\mu^\eta_1) \mathbf{1}_{\{\mu^\eta_1 < \infty\}} +
\xi_1 \mathbf{1}_{\{\mu^\eta_1 = \infty\}}] + 5\varepsilon +
6\varepsilon/\eta.
\end{eqnarray*}
The result follows since $\varepsilon$ is arbitrary.
\end{proof}

\bigskip

The proof of Laraki and Solan
(2005, Section 3.3) delivers the following result, which says
that each player $i$ has a simple randomized $\varepsilon $-optimal stopping
time that is based on $\mu _{i}^{\eta }$, provided $\eta $ is sufficiently
small.

\begin{lemma}
\label{lemma optimal strategies phi_i^0} For every $i=1,2$, every $%
\varepsilon,\eta >0$, and every positive $\mathcal{F}_{\mu _{i}^{\eta }}$%
-measurable function $\delta _{i}$, there exists a simple randomized
stopping time $\varphi _{i}^{\eta }$ with basis $\mu _{i}^{\eta }$ and delay
at most $\delta _{i}$ that satisfies
\begin{equation}
\gamma _{i}(\varphi _{i}^{\eta },\lambda _{3-i}\mid \mathcal{F}_{\mu
_{i}^{\eta }})\geq v_{i}(\mu _{i}^{\eta })-\varepsilon -\eta \hbox{ on }%
\Omega ,  \label{equ 713}
\end{equation}%
for every stopping time $\lambda _{3-i}\geq \mu _{i}^{\eta }$.
\end{lemma}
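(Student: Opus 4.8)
The plan is to prove the case $i=1$; the case $i=2$ is symmetric, with the roles of $X$ and $Y$ exchanged and Lemma \ref{lemma vxy2} replaced by its player-$2$ analogue $v_2(t)\le\max\{X_2(t),Z_2(t)\}$. The stopping time $\varphi_1^\eta$ I build has basis $\mu_1^\eta$, and I will choose its delay so as to treat separately two regimes, determined by the position of $Y_1(\mu_1^\eta)$ relative to the value. The governing fact is (\ref{equ martingale1}): $X_1(\mu_1^\eta)\ge v_1(\mu_1^\eta)-\eta$ on $\{\mu_1^\eta<\infty\}$, so stopping essentially at $\mu_1^\eta$ and being first to stop already secures almost the value; the only danger is that the opponent stops first and hands player $1$ the payoff $Y_1$, which may lie far below $v_1$.

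I partition $\{\mu_1^\eta<\infty\}$ into the $\mathcal F_{\mu_1^\eta}$-measurable sets
\[
G:=\{\mu_1^\eta<\infty\}\cap\{Y_1(\mu_1^\eta)\ge v_1(\mu_1^\eta)-\eta\},\qquad B:=\{\mu_1^\eta<\infty\}\cap\{Y_1(\mu_1^\eta)< v_1(\mu_1^\eta)-\eta\}.
\]
On $B$, Lemma \ref{lemma vxy2} gives $v_1(\mu_1^\eta)\le\max\{Y_1(\mu_1^\eta),Z_1(\mu_1^\eta)\}$, and since $Y_1(\mu_1^\eta)<v_1(\mu_1^\eta)$ there, necessarily $Z_1(\mu_1^\eta)\ge v_1(\mu_1^\eta)$. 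I therefore set the delay of $\varphi_1^\eta$ equal to $0$ on $B$ (so $\varphi_1^\eta=\mu_1^\eta$ there) and equal to some small positive $\delta\le\delta_1$ on $G$; on $\{\mu_1^\eta=\infty\}$ the value of the delay is immaterial since $\varphi_1^\eta=\infty$. This is a simple randomized stopping time with basis $\mu_1^\eta$ and delay at most $\delta_1$.

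Now fix any $\lambda_2\ge\mu_1^\eta$ and bound the conditional payoff on each piece. On $\{\mu_1^\eta=\infty\}$ one has $\varphi_1^\eta=\lambda_2=\infty$, so the payoff is $\xi_1=v_1(\infty)=v_1(\mu_1^\eta)$ by the definition of $v_1$. On $B$, since player $1$ stops exactly at $\mu_1^\eta$ and $\lambda_2\ge\mu_1^\eta$, the only outcomes are $\{\varphi_1^\eta<\lambda_2\}$, yielding $X_1(\mu_1^\eta)\ge v_1(\mu_1^\eta)-\eta$, and $\{\varphi_1^\eta=\lambda_2\}$, yielding $Z_1(\mu_1^\eta)\ge v_1(\mu_1^\eta)$; hence the conditional payoff is at least $v_1(\mu_1^\eta)-\eta$ there. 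On $G$ the delay is positive, so by the non-atomicity property recorded after Definition \ref{def simple} we have $\mathbf P(\varphi_1^\eta=\lambda_2\mid\mathcal F_{\mu_1^\eta})=0$, and the conditional payoff equals
\[
\mathbf E\big[X_1(\varphi_1^\eta)\mathbf 1_{\{\varphi_1^\eta<\lambda_2\}}+Y_1(\lambda_2)\mathbf 1_{\{\lambda_2<\varphi_1^\eta\}}\mid\mathcal F_{\mu_1^\eta}\big].
\]
Replacing $X_1(\varphi_1^\eta)$ by $X_1(\mu_1^\eta)$ and $Y_1(\lambda_2)$ by $Y_1(\mu_1^\eta)$, both of which are $\mathcal F_{\mu_1^\eta}$-measurable and $\ge v_1(\mu_1^\eta)-\eta$ on $G$, and using that the two indicators sum to $1$ almost surely, the resulting main term is exactly $v_1(\mu_1^\eta)-\eta$. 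It then remains only to choose $\delta$ so that the error from these two replacements is at most $\varepsilon$; since $\varphi_1^\eta\in[\mu_1^\eta,\mu_1^\eta+\delta]$ and $\lambda_2\in[\mu_1^\eta,\mu_1^\eta+\delta)$ on the relevant events, this error is dominated by the conditional expectation of $\sup_{\rho\in[0,\delta]}\big(|X_1(\mu_1^\eta+\rho)-X_1(\mu_1^\eta)|+|Y_1(\mu_1^\eta+\rho)-Y_1(\mu_1^\eta)|\big)$.

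The main obstacle is precisely this last step. Right continuity of $X_1$ and $Y_1$ makes the displayed supremum tend to $0$ almost surely as $\delta\downarrow0$, hence its $\mathcal F_{\mu_1^\eta}$-conditional expectation does too, but one must select a single $\mathcal F_{\mu_1^\eta}$-measurable $\delta\in(0,\delta_1]$ for which this conditional expectation is $\le\varepsilon$ on $G$, uniformly over the opponent's $\lambda_2$. This measurable-selection point is exactly what the construction of Laraki and Solan (2005, Section 3.3) supplies; granting it, the three contributions combine to give $\gamma_1(\varphi_1^\eta,\lambda_2\mid\mathcal F_{\mu_1^\eta})\ge v_1(\mu_1^\eta)-\varepsilon-\eta$ on all of $\Omega$, as required.
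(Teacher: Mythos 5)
Your construction is correct and is essentially the argument the paper relies on: the paper gives no proof of this lemma, quoting it as a consequence of Laraki and Solan (2005, Section 3.3), and your dichotomy --- stop exactly at $\mu_1^\eta$ on the set where $Y_1(\mu_1^\eta)<v_1(\mu_1^\eta)-\eta$ (where Lemma \ref{lemma vxy2} forces $Z_1(\mu_1^\eta)\geq v_1(\mu_1^\eta)$ and the opponent, being constrained to $\lambda_2\geq\mu_1^\eta$, cannot stop strictly first), and randomize over a small interval on the complementary set where $Y_1(\mu_1^\eta)\geq v_1(\mu_1^\eta)-\eta$, using (\ref{equ martingale1}) for the $X_1$-outcome --- is precisely the mechanism of that cited construction. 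The single step you defer, namely the choice of one positive $\mathcal{F}_{\mu_1^\eta}$-measurable $\delta$ making the conditional expectation of $\sup_{\rho\in[0,\delta]}\bigl(|X_1(\mu_1^\eta+\rho)-X_1(\mu_1^\eta)|+|Y_1(\mu_1^\eta+\rho)-Y_1(\mu_1^\eta)|\bigr)$ at most $\varepsilon$ uniformly over $\lambda_2$, is exactly what the paper's citation covers, and can be closed by a supremum-form variant of Appendix Lemma \ref{lemma new} (the stated form, which controls $|X(\lambda)-{\mathbf{E}}[X(\rho)\mid\mathcal{F}_\lambda]|$, is not quite enough because your error terms carry indicators like $\mathbf{1}_{\{\varphi_1^\eta<\lambda_2\}}$ that are not $\mathcal{F}_{\mu_1^\eta}$-measurable; the supremum version follows from right continuity by conditional bounded convergence along $\delta=1/n$).
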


By Eq. (\ref{equ martingale2}), before time $\mu^\eta_1$ one has $X_1 < Y_1$%
. When $X_{1}(t)\leq Z_{1}(t)\leq Y_{1}(t)$ for every $t,$ a non-randomized $%
\varepsilon $-optimal stopping time exists (Lepeltier and Maingueneau,
1984). Laraki and Solan (2002, Section 4.1) use this observation to conclude
the following.

\begin{lemma}
\label{Lemma purification}If $Z_{i}(t)\in co\{X_{i}(t),Y_{i}(t)\}$ for every
$t \geq 0$ and each $i=1,2$, then the simple randomized stopping time $%
\varphi _{i}^{\eta }$ in Lemma \ref{lemma optimal strategies phi_i^0} can be
taken to be non-randomized (that is, the delay of both players is 0).
\end{lemma}

\section{The Non-Zero-Sum Case}

\label{sec1}

In the present section we prove Theorems \ref{theorem1} and \ref{theorem2}.
Fix $\varepsilon > 0$ once and for all.

Let $\delta_0$ (resp. $\delta_1$, $\delta_2$) be a positive $\mathcal{F}_\tau$%
-measurable function that satisfies the following inequalities for each $%
i\in \{1,2\}$ and for the stopping time $\tau = 0$ (resp. $\tau= \mu
_{1}^{\eta }$, $\tau=\mu _{2}^{\eta }$). Such $\delta_0 $ (resp. $\delta_1$,
$\delta_2$) exists because the processes $(X_{i},Y_{i},v_{i})_{i=1,2}$
are right continuous.
\begin{eqnarray}
{\mathbf{P}}( \sup_{\rho\in [0,\delta_0]}|X_{i}(\tau )-X_{i}(\tau+\rho)| > \varepsilon) &\leq&
\varepsilon,  \label{equ 59} \\
{\mathbf{P}}( \sup_{\rho\in [0,\delta_0]}|Y_{i}(\tau )-Y_{i}(\tau+\rho)| > \varepsilon) &\leq&
\varepsilon,  \label{equ 60} \\
{\mathbf{P}}( \sup_{\rho\in [0,\delta_0]}|v_{i}(\tau )-v_{i}(\tau+\rho)| > \varepsilon) &\leq&
\varepsilon.  \label{equ 61}
\end{eqnarray}

We divide the set $\Omega $ into six $\mathcal{F}_{0}$-measurable
subsets. For each of these subsets we then define a pair of randomized
stopping times $(\varphi _{1}^{\ast },\varphi _{2}^{\ast })$, and we prove
that, when restricted to each set, this pair is a $k\varepsilon $%
-equilibrium, for some $0\leq k\leq 13$.
It will then follow that $(\varphi _{1}^{\ast },\varphi _{2}^{\ast })$,
when viewed as a randomized stopping time on $\Omega$,
is a $78\ep$-equilibrium.
The partition is similar to that in
Laraki, Solan and Vieille (2005), and only the treatment on the last subset
is different.

\bigskip

Denote by $\psi_i(t,\varepsilon)$ an $\varepsilon$-optimal stopping time of
player $i$ in the game $\Gamma_{3-i}(t) $; thus, the randomized stopping
time $\psi_i(t,\varepsilon)$ is a punishment strategy against player $3-i$,
as it ensures that his payoff will not exceed $v_{3-i}(t) + \varepsilon$.

\bigskip \noindent\textbf{Part 1:} The set $A_1 := \{X_1(0) \geq v_1(0)\}
\cap \{X_2(0) \geq Z_2(0)\}$.

We prove that when restricted to the set $A_{1}$, the pair $(\varphi
_{1}^{\ast },\varphi _{2}^{\ast })$ that is defined as follows is a $%
4\varepsilon $-equilibrium:

\begin{itemize}
\item $\varphi^*_1 = 0$: player 1 stops at time 0.

\item $\varphi^*_2 = \psi_2(\delta_0,\varepsilon)$: If player 1 does not
stop before time $\delta_0$, player 2 punishes him in the game $%
\Gamma_1(\delta_0)$ that starts at time $\delta_0$.
\end{itemize}

If no player deviates, the game is stopped by player 1, and the payoff is
\begin{equation*}
\gamma(\varphi^*_1,\varphi^*_2\mid\mathcal{F}_0) = (X_{1}(0),X_{2}(0)) %
\hbox{ on } A_1.
\end{equation*}
We argue that player 2 cannot profit by deviating. Indeed, let $\lambda_2 $
be any non-randomized stopping time of player 2. Then on $A_1$
\begin{equation*}
\gamma_2(\varphi^*_1,\lambda_2\mid\mathcal{F}_0) = Z_2(0)\mathbf{1}_{A_1
\cap \{\lambda_2 = 0\}} + X_2(0)\mathbf{1}_{A_1 \cap \{\lambda_2 > 0\}} \leq
X_2(0) = \gamma_2(\varphi^*_1,\varphi^*_2\mid\mathcal{F}_0),
\end{equation*}
and the claim follows.

We now argue that on $A_1$ player 1 cannot profit more than $4\varepsilon$
by deviating from $\varphi^*_1$. Let $\lambda_1$ be any non-randomized
stopping time of player 1. Then by the definition of $\varphi^*_2$, on $A_1$
\begin{equation*}
\gamma_1(\lambda_1,\varphi^*_2\mid\mathcal{F}_0) \leq {\mathbf{E}}%
[X_1(\lambda_1) \mathbf{1}_{\{\lambda_1 < \delta_0\}} + (v_1(\delta_0) +
\varepsilon) \mathbf{1}_{\{\delta_0 \leq \lambda_1\}}\mid \mathcal{F}_0].
\end{equation*}
By (\ref{equ 59}), (\ref{equ 61}), and since $X_1(0) \geq v_1(0)$ on $A_1$,
it follows that on $A_1$
\begin{equation*}
{\mathbf{P}}( \gamma_1(\lambda_1,\varphi^*_2\mid\mathcal{F}_0) > {\mathbf{E}}%
[X_1(0) \mathbf{1}_{\{\lambda_1 < \delta_0\}} + (X_1(0) + \varepsilon)
\mathbf{1}_{\{\delta_0 \leq \lambda_1\}}\mid \mathcal{F}_0] + \varepsilon)
\leq 2\varepsilon.
\end{equation*}
Since $\gamma_1(\varphi^*_1,\varphi^*_2\mid\mathcal{F}_0) = X_1(0)$ on $A_1$
it follows that
\begin{equation}  \label{equ 84}
{\mathbf{P}}( A_1 \cap \{\gamma_1(\lambda_1,\varphi^*_2\mid\mathcal{F}_0) >
\gamma_1(\varphi^*_1,\varphi^*_2\mid\mathcal{F}_0)+ 2\varepsilon\}) \leq
2\varepsilon,
\end{equation}
and the desired results follows.

\bigskip

\bigskip \noindent \textbf{Part 2:} The set $A_{2}:=\{Z_{2}(0)>X_{2}(0)\}%
\cap \{Z_{1}(0)\geq Y_{1}(0)\}$.

We prove that when restricted to the set $A_2$, the pair $%
(\varphi_1^*,\varphi^*_2)$ that is defined as follows is a 0-equilibrium:

\begin{itemize}
\item $\varphi^*_1 = 0$: player 1 stops at time 0.

\item $\varphi^*_2 = 0$: player 2 stops at time 0.
\end{itemize}

If no player deviates, both players stop at time 0, and the payoff is
\begin{equation*}
\gamma(\varphi^*_1,\varphi^*_2\mid\mathcal{F}_0) = (Z_{1}(0),Z_{2}(0)) %
\hbox{ on } A_2.
\end{equation*}
To see that player 1 cannot profit by deviating, fix an arbitrary
non-randomized stopping time $\lambda_1$ for player 1. On $A_2$ one has
\begin{equation}  \label{equ 84b}
\gamma_1(\lambda_1,\varphi^*_2\mid\mathcal{F}_0) = Z_1(0) \mathbf{1}%
_{\{\lambda_1 = 0\}} + Y_1(0) \mathbf{1}_{\{\lambda_1 > 0\}} \leq Z_1(0) =
\gamma_1(\varphi^*_1,\varphi^*_2\mid\mathcal{F}_0),
\end{equation}
as desired. A symmetric argument shows that player 2 cannot profit by
deviating either.

\bigskip

\bigskip \noindent\textbf{Part 3:} The set $A_3 := \{Y_1(0) > Z_1(0)\} \cap
\{Y_2(0) \geq v_2(0)\}$.

The case of the set $A_{3}$ is analogous to Part 1: when restricted to $A_3$%
, the pair of randomized stopping times in which player 2 stops at time 0,
and player 1 plays an $\varepsilon $-optimal stopping time $%
\psi_1(\delta_0,\varepsilon)$ in the game $\Gamma _{2}(\delta_0 )$, is a $%
4\varepsilon$-equilibrium.

\bigskip

\bigskip \noindent \textbf{Part 4:} The set $A_{4}:=\{X_{1}(0)\geq
v_{1}(0)\}\cap \{X_{2}(0)>Y_{2}(0)\}$.

We prove that when restricted to the set $A_{4}$, the pair $(\varphi
_{1}^{\ast },\varphi _{2}^{\ast })$ that is defined as follows is a $%
6\varepsilon $-equilibrium:

\begin{itemize}
\item $\varphi _{1}^{\ast }(r,\cdot )=r\delta _{0}$: player 1 stops at a
random time between time $0$ and time $\delta _{0}$.

\item $\varphi _{2}^{\ast }=\psi _{2}(\delta _{0},\varepsilon )$: If player
1 does not stop before time $\delta _{0}$, player 2 punishes him in the game
$\Gamma _{1}(\delta _{0})$ that starts at time $\delta _{0}$.
\end{itemize}

If no player deviates, the game is stopped by player 1 before time $\delta_0
$, and by (\ref{equ 59}) the payoff is within $2\varepsilon $ of $%
(X_{1}(0),X_{2}(0))$:
\begin{equation}  \label{equ 84d}
{\mathbf{P}}(A_4 \cap\{|\gamma _{i}(\varphi _{1}^{\ast },\varphi _{2}^{\ast
})-X_{i}(0)|>\varepsilon\})\leq\varepsilon.
\end{equation}%
The same argument\footnote{%
The additional $\varepsilon$ arises because in Part 1 we had $%
\gamma_1(\varphi^*_1,\varphi^*_2) = X_1(0)$, whereas in Part 4 we have ${%
\mathbf{P}}(A_4\cap\{\gamma_1(\varphi^*_1,\varphi^*_2) <
X_1(0)-\varepsilon\}) \leq \varepsilon$.} as in Part 1 shows that
\begin{equation}  \label{equ 84c}
{\mathbf{P}}( A_4 \cap \{\gamma_1(\lambda_1,\varphi^*_2\mid\mathcal{F}_0) >
\gamma_1(\varphi^*_1,\varphi^*_2\mid\mathcal{F}_0)+ 3\varepsilon\}) \leq
3\varepsilon.
\end{equation}
It follows that player 1 cannot profit more than $6\varepsilon$ by deviating
from $\varphi^*_1$.

We now argue that player 2 cannot profit more than $5\varepsilon$ by
deviating from $\varphi^*_2$. Fix a non-randomized stopping time $\lambda
_{2}$ for player 2. On $A_4$ we have $\varphi^*_1 \leq \delta_0$, and ${%
\mathbf{P}}(A_4 \cap\{ \varphi^*_1 = \lambda_2\}) = 0$, and therefore
\begin{equation*}
\gamma _{2}(\varphi _{1}^{\ast },\lambda _{2}) ={\mathbf{E}}[X_{2}(\varphi
_{1}^{\ast })\mathbf{1}_{\{\varphi _{1}^{\ast }<\lambda
_{2}\}}+Y_{2}(\lambda _{2})\mathbf{1}_{\{\lambda _{2}<\varphi _{1}^{\ast
}\}} \mid \mathcal{F}_0] \hbox{ on } A_4.
\end{equation*}
By (\ref{equ 59}) and (\ref{equ 60}),
\begin{equation*}
{\mathbf{P}}( \gamma _{2}(\varphi _{1}^{\ast },\lambda _{2}) > {\mathbf{E}}%
[(X_2(0) + \varepsilon)\mathbf{1}_{\{\varphi _{1}^{\ast }<\lambda
_{2}\}}+(Y_{2}(0)+\varepsilon)\mathbf{1}_{\{\lambda _{2}<\varphi _{1}^{\ast
}\}} \mid \mathcal{F}_0]) \leq 2\varepsilon.
\end{equation*}
Because $X_2(0) > Y_2(0)$ on $A_4$ we have
\begin{equation*}
{\mathbf{P}}( \gamma _{2}(\varphi _{1}^{\ast },\lambda _{2}) > X_2(0) +
\varepsilon) \leq 2\varepsilon.
\end{equation*}
Together with (\ref{equ 84d}) we deduce that
\begin{equation*}
{\mathbf{P}}( \gamma _{2}(\varphi _{1}^{\ast },\lambda _{2}) > \gamma
_{2}(\varphi _{1}^{\ast },\varphi^* _{2}) + 2\varepsilon) \leq 3\varepsilon,
\end{equation*}
and the claim follows.

\bigskip \noindent \textbf{Part 5:} The set $A_{5}:=\{X_{1}(0)\geq
v_{1}(0)\}\setminus (A_{1}\cup A_{2}\cup A_{3}\cup A_{4})$.

We claim that ${\mathbf{P}}(A_5) = 0$. Since $X_{1}(0)\geq v_{1}(0)$ on $A_5$%
, and since $A_5 \cap A_1 = \emptyset$, it follows that $X_{2}(0)<Z_{2}(0)$
on $A_5$. Since $A_5 \cap A_2 = \emptyset$, it follows that $%
Z_{1}(0)<Y_{1}(0)$ on $A_5$. Since $A_5\cap A_3 = \emptyset$, it follows
that $Y_{2}(0)<v_{2}(0)$ on $A_5$. Since $A_5 \cap A_4 = \emptyset$, it
follows that $Y_{2}(0)\geq X_{2}(0)$ on $A_5$. Lemma \ref{lemma vxy} then
implies that
\begin{equation*}
Y_2(0) < v_{2}(0)\leq \max \{X_{2}(0),Y_{2}(0)\} = Y_2(0) \hbox{ on } A_5,
\end{equation*}
which in turn implies that ${\mathbf{P}}(A_5) = 0$, as claimed.

\bigskip

The union $A_{1}\cup A_{2}\cup A_{3}\cup A_{4}\cup A_{5}$ includes the set $%
\{X_{1}(0)\geq v_{1}(0)\}$. Thus, when restricted to this set, the game has
a $7\varepsilon $-equilibrium. By symmetric arguments, a $6\varepsilon $%
-equilibrium exists on the set $\{Y_{2}(0)\geq v_{2}(0)\}$. We now
construct
a $13\varepsilon$-equilibrium on the remaining set, $%
\{X_{1}(0)<v_{1}(0)\}\cap \{Y_{2}(0)<v_{2}(0)\}$.

\bigskip

\bigskip \noindent\textbf{Part 6:} The set $A_6 := \{X_1(0) < v_1(0)\} \cap
\{Y_2(0) < v_2(0)\}$.

Fix $\eta > 0$, and for each $i\in \{1,2\}$ let $\varphi _{i}^{\eta }$ be a
simple randomized stopping time with basis $\mu^\eta_i$ and delay at most $%
\delta_i$ that satisfies Eq. (\ref{equ 713}) for every stopping time $%
\lambda_{3-i} \geq \mu^\eta_i$ (see Lemma \ref{lemma optimal strategies
phi_i^0}). Let $\psi_1(\mu^\eta_2+\delta_2,\varepsilon)$ (resp. $%
\psi_2(\mu^\eta_1+\delta_1,\varepsilon)$) be a simple randomized $%
\varepsilon $-optimal stopping time for player 1 in the game $\Gamma
_{2}(\mu _{2}^{\eta }+\delta_2 )$ (resp. in the game $\Gamma _{1}(\mu
_{1}^{\eta }+\delta_1 )$); that is, a stopping time that achieves the
infimum in (\ref{equ zero2b}) up to $\varepsilon $, for $t=\mu
_{2}^{\eta}+\delta_2$ (resp. the infimum in (\ref{equ zero1b}) up to $%
\varepsilon $, for $t=\mu _{1}^{\eta }+\delta_1 $).

Set $\mu ^{\eta }=\min \{\mu _{1}^{\eta },\mu _{2}^{\eta }\}$. We further
divide $A_{6}$ into six $\mathcal{F}_{\mu^\eta}$-measurable subsets; the
definition of $(\varphi _{1}^{\ast },\varphi _{2}^{\ast })$ is different in
each subset, and is given in the second and third columns of Table 1. Under $%
(\varphi _{1}^{\ast },\varphi _{2}^{\ast })$ the game will be stopped at
time $\mu ^{\eta }$ or during a short interval after time $\mu ^{\eta }$, if
$\mu ^{\eta }<\infty $, and will not be stopped if $\mu ^{\eta }=\infty $.

\begin{equation*}
\begin{array}{|l|l|l|l|}
\hbox{Subset} & \varphi _{1}^{\ast } & \varphi _{2}^{\ast } & \gamma
_{1}(\varphi _{1}^{\ast },\varphi _{2}^{\ast }) \\ \hline\hline
A_{61}:=A_{6}\cap \left\{ {\mu _{1}^{\eta }<\mu _{2}^{\eta }}\right\}  &
\varphi _{1}^{\eta } & \psi _{2}(\mu ^{\eta }+\delta _{1}) & \geq X_{1}(\mu
^{\eta })-2\varepsilon  \\ \hline
A_{62}:=A_{6}\cap \left\{ \mu _{2}^{\eta }<\mu _{1}^{\eta }\right\}  & \psi
_{1}(\mu ^{\eta }+\delta _{2}) & \varphi _{2}^{\eta } & \geq Y_{1}(\mu
^{\eta })-2\varepsilon  \\ \hline
A_{63}:=A_{6}\cap \left\{ {\mu _{1}^{\eta }=\mu _{2}^{\eta }=\infty }%
\right\}  & \infty  & \infty  & =\xi _{1} \\ \hline
A_{64}:=A_{6}\cap \left\{ {\mu _{1}^{\eta }=\mu _{2}^{\eta }<\infty }%
\right\} \cap \{Z_{1}(\mu _{1}^{\eta })<Y_{1}(\mu _{1}^{\eta })\} & \psi
_{1}(\mu _{1}^{\eta }+\delta _{2},\varepsilon ) & \mu ^{\eta } & =Y_{1}(\mu
^{\eta }) \\ \hline
A_{65}:=A_{6}\cap \left\{ {\mu _{1}^{\eta }=\mu _{2}^{\eta }<\infty }%
\right\} \cap \{Z_{2}(\mu _{1}^{\eta })<X_{2}(\mu _{1}^{\eta })\} & \mu
^{\eta } & \psi _{2}(\mu ^{\eta }+\delta _{1},\varepsilon ) & =X_{1}(\mu
^{\eta }) \\ \hline
A_{66}:=A_{6}\cap \left\{ {\mu _{1}^{\eta }=\mu _{2}^{\eta }<\infty }%
\right\} \cap \{Y_{1}(\mu _{1}^{\eta })\leq Z_{1}(\mu _{1}^{\eta })\} &  &
&  \\
\ \ \ \ \ \ \ \ \ \ \ \ \ \ \cap \{X_{2}(\mu ^{\eta })\leq Z_{2}(\mu ^{\eta
})\} & \mu ^{\eta } & \mu ^{\eta } & =Z_{1}(\mu ^{\eta }) \\ \hline
\end{array}%
\end{equation*}

\centerline{Table 1: The randomized stopping times $(\varphi _{1}^{\ast },\varphi _{2}^{\ast })$ on $A_6$,
with the payoff to player 1.}

\bigskip

We argue that when restricted to $A_{6}$, the pair $(\varphi _{1}^{\ast
},\varphi _{2}^{\ast })$ is a $13\varepsilon$-equilibrium. Note that
the roles of the two players in the definition of $(\varphi _{1}^{\ast
},\varphi _{2}^{\ast })$ are symmetric: $\varphi _{1}^{\ast }=\varphi
_{2}^{\ast }$ on $A_{63}$ and $A_{66}$, and the role of player 1 (resp.
player 2) in $A_{61}$ and $A_{64}$ is similar to the role of player 2 (resp.
player 1) in $A_{62}$ and $A_{65}$. To prove that $(\varphi _{1}^{\ast
},\varphi _{2}^{\ast })$ is a $13\varepsilon $-equilibrium it is
therefore sufficient to prove that the probability that player 1 can profit
more than $3\varepsilon$ by deviating from $\varphi _{1}^{\ast }$ is
at most $10\varepsilon $.

We start by bounding the payoff $\gamma_1(\varphi _{1}^{\ast },\varphi
_{2}^{\ast } \mid \mathcal{F}_{\mu^\eta})$ (the bound that we derive appears
on the right-most column in Table 1), and by showing that
\begin{equation}  \label{equ403}
\gamma_1(\varphi _{1}^{\ast },\varphi _{2}^{\ast } \mid \mathcal{F}%
_{\mu^\eta}) \geq v_1(\mu^\eta) - 3\varepsilon-\eta \hbox{ on } A_6
\setminus A_{63}.
\end{equation}
We prove this in turn on each of the sets $A_{61},\dots,A_{66}$:

\begin{itemize}
\item On $A_{61}$ we have $\mu^\eta = \mu^\eta_1$, and the game is stopped
by player 1 between times $\mu^\eta$ and $\mu^\eta+\delta_1$, so that by (%
\ref{equ 59}) we have
\begin{equation}  \label{equ 561}
{\mathbf{P}}(A_{61} \cap \{\gamma_1(\varphi _{1}^{\ast },\varphi _{2}^{\ast
} \mid \mathcal{F}_{\mu^\eta}) <X_1(\mu^\eta) - \varepsilon\}) \leq
\varepsilon.
\end{equation}
By (\ref{equ martingale1}) we have $X_1(\mu^\eta) \geq v_1(\mu^\eta)-\eta$,
and therefore (\ref{equ403}) holds on $A_{61}$.

\item On $A_{62}$ we have $\mu^\eta = \mu^\eta_2$, and the game is stopped
by player 2 between times $\mu^\eta$ and $\mu^\eta+\delta_2$, so that by (%
\ref{equ 60}) we have
\begin{equation}  \label{equ 57a}
{\mathbf{P}}(A_{62} \cap \{\gamma_1(\varphi _{1}^{\ast },\varphi _{2}^{\ast
} \mid \mathcal{F}_{\mu^\eta}) <Y_1(\mu^\eta) - \varepsilon\}) \leq
\varepsilon.
\end{equation}
By (\ref{equ martingale2}) we have $X_1(\mu^\eta) < Y_1(\mu^\eta)$ on $A_{62}
$, so that by Lemma \ref{lemma vxy} we have $Y_1(\mu^\eta) \geq v_1(\mu^\eta)
$. It follows that (\ref{equ403}) holds on $A_{62}$.

\item On $A_{63}$ no player ever stops, and therefore $\gamma_1(\varphi
_{1}^{\ast },\varphi _{2}^{\ast } \mid \mathcal{F}_{\mu^\eta}) = \xi_1$.

\item On $A_{64}$ player 2 stops at time $\mu^{\eta }$, and therefore $%
\gamma _{1}(\varphi _{1}^{\ast },\varphi _{2}^{\ast } \mid \mathcal{F}%
_{\mu^\eta})=Y_{1}(\mu^{\eta })$. By Lemma \ref{lemma vxy2}, on $A_{64}$ we
have
\begin{equation*}
v_{1}(\mu^{\eta })\leq \max \{Y_{1}(\mu^{\eta }),Z_{1}(\mu^{\eta
})\}=Y_{1}(\mu^{\eta }),
\end{equation*}%
and therefore (\ref{equ403}) holds on $A_{64}$.

\item On $A_{65}$ player 1 stops at time $\mu^{\eta }$, and therefore $%
\gamma _{1}(\varphi _{1}^{\ast },\varphi _{2}^{\ast } \mid \mathcal{F}%
_{\mu^\eta})=X_{1}(\mu^{\eta })$. By (\ref{equ martingale1}) we have $%
X_{1}(\mu^{\eta })\geq v_{1}(\mu^{\eta })-\eta $, and therefore (\ref{equ403}%
) holds on $A_{65} $.

\item On $A_{66}$ both players stop at time $\mu^{\eta }$, and therefore $%
\gamma _{1}(\varphi _{1}^{\ast },\varphi _{2}^{\ast } \mid \mathcal{F}%
_{\mu^\eta})=Z_{1}(\mu^{\eta })$. By Lemma \ref{lemma vxy2} on this set we
have
\begin{equation*}
v_{1}(\mu^{\eta })\leq \max \{Y_{1}(\mu^{\eta }),Z_{1}(\mu^{\eta
})\}=Z_{1}(\mu^{\eta }),
\end{equation*}%
and therefore (\ref{equ403}) holds on $A_{66}$.
\end{itemize}

Fix a stopping time $\lambda_1$ for player 1. To complete the proof of
Theorem \ref{theorem1} we prove that
\begin{equation*}
{\mathbf{P}}(A_6 \cap \{\gamma_1(\lambda_1,\varphi^*_2) >
\gamma_1(\varphi^*_1,\varphi^*_2) + 3\varepsilon\}) \leq 10\varepsilon.
\end{equation*}

\begin{itemize}
\item On the set $A_6 \cap \{\lambda_1 < \mu^\eta\}$ we have by the
definition of $\mu^\eta_1$, since $\mu^\eta \leq \mu^\eta_1$, by Lemma %
\ref{lemma exp v}, and by (\ref{equ403}),
\begin{eqnarray}  \label{equ 490}
\gamma_1(\lambda_1,\varphi^*_2 \mid \mathcal{F}_{\lambda_1}) &=&
X_1(\lambda_1)  \notag \\
&<& v_1(\lambda_1) - \eta \\
&\leq& {\mathbf{E}}[v_1(\mu^\eta) \mathbf{1}_{A_6 \cap \{\lambda_1 <
\mu^\eta < \infty\}} + \xi_1 \mathbf{1}_{A_6 \cap \{\lambda_1 < \mu^\eta =
\infty\}}\mid \mathcal{F}_{\lambda_1}] - \eta  \notag \\
&\leq& \gamma_1(\varphi^*_1,\varphi^*_2 \mid \mathcal{F}_{\lambda_1}) +3\ep,
\notag
\end{eqnarray}
where the last inequality holds by (\ref{equ403}) and because the payoff of
player 1 on $A_{63}$ is $\xi_1$.

\item On the set $A_{61} \cap \{ \mu^\eta \leq \lambda_1\}$ we have by the
definition of $\varphi^*_2$
\begin{equation*}
\gamma_1(\lambda_1,\varphi^*_2 \mid \mathcal{F}_{\mu^\eta}) = {\mathbf{E}}%
[X_1(\lambda_1) \mathbf{1}_{\{\lambda_1 \leq \mu^\eta + \delta_1\}} +
(v_1(\mu^\eta+\delta_1)+\varepsilon)\mathbf{1}_{\{\mu^\eta +
\delta_1<\lambda_1\}} \mid \mathcal{F}_{\mu^\eta}].
\end{equation*}
By (\ref{equ 59}), (\ref{equ 61}) and (\ref{equ martingale1}) we have
\begin{equation*}
{\mathbf{P}}( \gamma_1(\lambda_1,\varphi^*_2 \mid \mathcal{F}_{\mu^\eta}) >
X_1(\mu^\eta) + 2\varepsilon )\leq 2\varepsilon.
\end{equation*}
By (\ref{equ 561}) we deduce that
\begin{equation}  \label{equ 601}
{\mathbf{P}}( \gamma_1(\lambda_1,\varphi^*_2 \mid \mathcal{F}_{\mu^\eta}) >
\gamma_1(\varphi^*_1,\varphi^*_2 \mid \mathcal{F}_{\mu^\eta}) + 3\varepsilon
)\leq 3\varepsilon.
\end{equation}

\item On the set $A_{62} \cap \{\mu^\eta \leq \lambda_1\}$ we have by the
definition of $\varphi^*_2$
\begin{equation*}
\gamma_1(\lambda_1,\varphi^*_2 \mid \mathcal{F}_{\mu^\eta}) = {\mathbf{E}}%
[X_1(\lambda_1) \mathbf{1}_{\{\mu^\eta \leq \lambda_1 < \varphi^*_2\}} +
Y_1(\varphi^*_2) \mathbf{1}_{\{\varphi^*_2 \leq \lambda_1\}} \mid \mathcal{F}%
_{\mu^\eta}].
\end{equation*}
By (\ref{equ 59}), (\ref{equ 60}), since $\mu^\eta_2 < \mu^\eta_1$ on $A_{62}
$, and by (\ref{equ martingale2}),
\begin{equation*}
{\mathbf{P}}( \gamma_1(\lambda_1,\varphi^*_2 \mid \mathcal{F}_{\mu^\eta}) > {%
\mathbf{E}}[(Y_1(\mu^\eta)+\varepsilon) \mathbf{1}_{\{\mu^\eta \leq
\lambda_1 < \varphi^*_2\}} + (Y_1(\mu^\eta) +\varepsilon)\mathbf{1}%
_{\{\varphi^*_2 \leq \lambda_1\}} \mid \mathcal{F}_{\mu^\eta}]) \leq
2\varepsilon.
\end{equation*}
By (\ref{equ 57a}) we deduce that
\begin{equation}  \label{equ 602}
{\mathbf{P}}( A_{62} \cap \{\gamma_1(\lambda_1,\varphi^*_2 \mid \mathcal{F}%
_{\mu^\eta}) > \gamma_1(\varphi^*_1,\varphi^*_2 \mid \mathcal{F}_{\mu^\eta})
+ 2\varepsilon\}) \leq 3\varepsilon.
\end{equation}

\item On the set $A_{63} \cap \{\mu^\eta_2 \leq \lambda_1\}$ we have $%
\mu^\eta = \lambda_1 = \infty$, so that
\begin{equation}  \label{equ 341}
\gamma_1(\varphi^*_1,\varphi^*_2\mid\mathcal{F}_{\mu^\eta}) = \xi_1 =
\gamma_1(\lambda_1,\varphi^*_2\mid\mathcal{F}_{\mu^\eta}) \hbox{ on } A_{63}
\cap \{\mu^\eta_2 \leq \lambda_1\}.
\end{equation}

\item On the set $A_{64} \cap \{\mu^\eta \leq \lambda_1\}$ we have
\begin{eqnarray}  \label{equ 603}
\gamma_1(\lambda_1,\varphi^*_2 \mid \mathcal{F}_{\mu^\eta}) &=& {\mathbf{E}}%
[Z_1(\mu^\eta) \mathbf{1}_{\{\lambda_1 = \mu^\eta\}} + Y_1(\mu^\eta) \mathbf{%
1}_{\{\mu^\eta<\lambda_1\}}] \\
&\leq& Y_1(\mu^\eta) = \gamma_1(\varphi^*_1,\varphi^*_2 \mid \mathcal{F}%
_{\mu^\eta}).  \notag
\end{eqnarray}

\item On the set $A_{65} \cap \{\mu^\eta \leq \lambda_1\}$ we have by the
definition of $\varphi^*_2$
\begin{equation*}
\gamma_1(\lambda_1,\varphi^*_2 \mid \mathcal{F}_{\mu^\eta}) = {\mathbf{E}}%
[X_1(\lambda_1) \mathbf{1}_{\{\mu^\eta \leq \lambda_1 < \mu^\eta +
\delta_1\}} + (v_1(\mu^\eta + \delta_1)+\ep) \mathbf{1}_{\{\mu^\eta +
\delta_1\leq\lambda_1\}} \mid \mathcal{F}_{\mu^\eta}].
\end{equation*}
By (\ref{equ 59}), (\ref{equ 61}) and (\ref{equ martingale1}) we have
\begin{equation*}
{\mathbf{P}}( A_{65} \cap \{\mu^\eta \leq \lambda_1\} \cap
\{\gamma_1(\lambda_1,\varphi^*_2 \mid \mathcal{F}_{\mu^\eta}) >
X_1(\mu^\eta)+2\varepsilon \}) \leq 2\varepsilon.
\end{equation*}
Because $\gamma_1(\varphi^*_1,\varphi^*_2 \mid \mathcal{F}_{\mu^\eta}) =X_1(\mu^\eta)$ on $A_{65}$, we obtain
\begin{eqnarray}  \label{equ 604}
{\mathbf{P}}( A_{65} \cap \{\mu^\eta \leq \lambda_1\} \cap
\{\gamma_1(\lambda_1,\varphi^*_2 \mid \mathcal{F}_{\mu^\eta}) >
\gamma_1(\varphi^*_1,\varphi^*_2 \mid \mathcal{F}_{\mu^\eta})+2\varepsilon
\}) \leq 2\varepsilon.
\end{eqnarray}

\item On the set $A_{66} \cap \{\mu^\eta \leq \lambda_1\}$ we have
\begin{eqnarray}
\gamma_1(\lambda_1,\varphi^*_2 \mid \mathcal{F}_{\mu^\eta}) &=&
Z_1(\mu^\eta) \mathbf{1}_{\{\lambda_1 = \mu^\eta\}} + Y_1(\mu^\eta) \mathbf{1%
}_{\{\mu^\eta<\lambda_1\}}  \notag \\
&\leq& Z_1(\mu^\eta) = \gamma_1(\varphi^*_1,\varphi^*_2 \mid \mathcal{F}%
_{\mu^\eta}).  \label{equ 605}
\end{eqnarray}
\end{itemize}

From (\ref{equ 601}), (\ref{equ 602}), (\ref{equ 341}), (\ref{equ 603}), (%
\ref{equ 604}) and (\ref{equ 605}) we deduce that on $A_6 \cap \{\mu^\eta
\leq \lambda_1\} $
\begin{eqnarray}  \label{equ 491}
{\mathbf{P}}( \gamma_1(\lambda_1,\varphi^*_2 \mid \mu^\eta) >
\gamma_1(\varphi^*_1,\varphi^*_2 \mid \mu^\eta) + 3\varepsilon ) \leq
10\varepsilon.
\end{eqnarray}
Because (\ref{equ 490}) and (\ref{equ 491}) hold for every stopping time $%
\lambda_1$ for player 1, it follows that $(\varphi^*_1,\varphi^*_2)$ is a $%
13\varepsilon$-equilibrium on $A_6$, as desired.

\bigskip

\begin{proof}[Proof of Theorem \protect\ref{theorem2}]
To prove that if $Z_{1}(t)\in co\{X_{1}(t),Y_{1}(t)\}$ and $Z_{2}(t)\in
co\{X_{2}(t),Y_{2}(t)\} $ for every $t \geq 0$, then there is a pair of
non-randomized stopping times that form an $\varepsilon $-equilibrium, we
are going to check where randomized stopping times were used in the proof of
Theorem \ref{theorem1}, and we will see how in each case one can use
non-randomized stopping times instead of randomized stopping times.

\begin{enumerate}
\item In Part 1 (and in the analogue part 3) we used a punishment strategy $%
\psi_1(\delta_0,\varepsilon)$ that in general is a non-randomized stopping
time. However, by Lemma \ref{Lemma purification}, when $Z_{2}(t)\in
co\{X_{2}(t),Y_{2}(t)\} $ for every $t \geq 0$, this randomized stopping
time can be taken to be non-randomized.

\item In Part 4 we used, in addition to the punishment strategy $\psi
_{2}(\delta _{0},\varepsilon )$, a simple randomized stopping time for
player 1. The set that we were concerned with in part 4 was the set $%
A_{4}:=\{X_{1}(0)\geq v_{1}(0)\}\cap \{X_{2}(0)>Y_{2}(0)\}.$ Because $%
Z_{2}(0)\in co\{X_{2}(0),Y_{2}(0)\}$
\begin{equation*}
X_{2}(0)\geq Z_{2}(0)\geq Y_{2}(0).
\end{equation*}%
But then the following pair of non-randomized stoping times is a $%
3\varepsilon $-equilibrium when restricted to $A_{4}$:

\begin{itemize}
\item $\varphi _{1}^{\ast }:=0$: player 1 stops at time 0.

\item $\varphi _{2}^{\ast }:=\psi _{2}(\delta _{0},\varepsilon )$: if player
1 does not stop before time $\delta _{0}$, player 2 punishes him (with a
non-randomized stopping time; see first item) in the game $\Gamma
_{1}(\delta _{0})$.
\end{itemize}

\item In Part 6 randomization was used both for punishment (on $A_{61}$, $%
A_{62}$, $A_{64}$ and $A_{65}$) and for stopping (on $A_{61}$ and $A_{62}$).
As mentioned above, under the assumptions of Theorem \ref{theorem2}, for
punishment one can use non-randomized stopping times. We now argue that one
can modify the definition of $(\varphi^*_1,\varphi^*_2)$ on $A_{61}$ and $%
A_{62}$ so as to obtain a non-randomized equilibrium. Because of the
symmetry between $A_{61}$ and $A_{62}$, we show how to modify the
construction only on $A_{61}$.

On $A_{61}$ we have $\mu _{1}^{\eta }<\mu _{2}^{\eta }$, so that by (\ref%
{equ martingale2}) we have $Y_{2}(\mu _{1}^{\eta })<X_{2}(\mu _{2}^{\eta })$%
. Because $Z_{2}(\mu _{1}^{\eta })\in co\{X_{2}(\mu _{1}^{\eta }),Y_{2}(\mu
_{1}^{\eta })\}$ it follows that $Y_{2}(\mu _{1}^{\eta })\leq Z_{2}(\mu
_{1}^{\eta })\leq X_{2}(\mu _{2}^{\eta })$. But then the following pair of
non-randomized stoping times is a $3\varepsilon $-equilibrium on $A_{61}$:

\begin{itemize}
\item $\varphi _{1}^{\ast }:=\mu^\eta_1$: player 1 stops at time $\mu^\eta_1$%
.

\item $\varphi _{2}^{\ast }:=\psi _{2}(\mu^\eta_1 + \delta _{1},\varepsilon
) $: if player 1 does not stop before time $\mu^\eta_1 + \delta _{1}$,
player 2 punishes him (with a non-randomized stopping time; see first item)
in the game $\Gamma _{1}(\mu^\eta_1 + \delta _{1})$.
\end{itemize}
\end{enumerate}
\end{proof}

\appendix

\section{The result of Laraki and Solan (2005)}
\label{appendix}

As mentioned before, Laraki and Solan (2005) proved Theorem \ref{theorem1} for two-player zero-sum Dynkin games.
We need the stronger version that is stated in Lemma \ref{lemma8}, where the payoff is conditioned on the $\sigma$-algebra $\calF_t$.
It turns out that the arguments used by Laraki and Solan (2005) prove this case as well, when one uses the following Lemma
instead of Lemma 4 in Laraki and Solan (2005).

\begin{lemma}
\label{lemma new} Let $X$ be a right-continuous process. For every stopping
time $\lambda$ and every positive $\mathcal{F}_\lambda$-measurable function $%
\varepsilon$ there is a positive $\mathcal{F}_\lambda$-measurable and
bounded function $\delta$ such that:
\begin{equation}  \label{equ91}
|X(\lambda) - {\mathbf{E}}[X(\rho)\mid \mathcal{F}_\lambda]| \leq
\varepsilon,
\end{equation}
for every stopping time $\rho$ that satisfies $\lambda \leq \rho \leq
\lambda+\delta$.
\end{lemma}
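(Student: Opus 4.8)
The plan is to reduce the statement to a control on the conditional expectation of the oscillation of $X$ on a short random interval after $\lambda$. Since $X(\lambda)$ is $\mathcal{F}_\lambda$-measurable, for any stopping time $\rho$ with $\lambda \le \rho \le \lambda + \delta$ one has
\[ |X(\lambda) - \mathbf{E}[X(\rho)\mid\mathcal{F}_\lambda]| = |\mathbf{E}[X(\lambda) - X(\rho)\mid\mathcal{F}_\lambda]| \le \mathbf{E}[S_\delta \mid \mathcal{F}_\lambda], \]
where $S_\delta := \sup_{\lambda \le s \le \lambda + \delta}|X(s) - X(\lambda)|$ denotes the oscillation of $X$ on $[\lambda,\lambda+\delta]$. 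Thus it suffices to produce a positive, bounded, $\mathcal{F}_\lambda$-measurable function $\delta$ for which $\mathbf{E}[S_\delta \mid \mathcal{F}_\lambda] \le \varepsilon$.

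First I would treat the deterministic delays $h = 1/n$. Recalling that we may assume $0 \le X \le 1$, each $S_{1/n}$ is bounded by $1$, and, because the interval $[\lambda,\lambda+\tfrac1n]$ shrinks to $\{\lambda\}$, the right-continuity of $X$ gives $S_{1/n}\downarrow 0$ pointwise as $n\to\infty$. By the conditional dominated (equivalently monotone) convergence theorem, $\mathbf{E}[S_{1/n}\mid\mathcal{F}_\lambda]\downarrow 0$ almost surely. I would then set
\[ N := \inf\{n\ge 1 : \mathbf{E}[S_{1/n}\mid\mathcal{F}_\lambda]\le\varepsilon\}, \]
which is almost surely finite and $\mathcal{F}_\lambda$-measurable (each $\mathbf{E}[S_{1/n}\mid\mathcal{F}_\lambda]$ and $\varepsilon$ being $\mathcal{F}_\lambda$-measurable), and define $\delta := 1/N$, putting $\delta := 1$ on the null event $\{N=\infty\}$. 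This $\delta$ is positive, bounded by $1$, and $\mathcal{F}_\lambda$-measurable.

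To conclude, I would verify the bound on each of the events $\{N=n\}\in\mathcal{F}_\lambda$: there $\delta = 1/n$, hence $S_\delta = S_{1/n}$, and since $\{N=n\}$ is $\mathcal{F}_\lambda$-measurable,
\[ \mathbf{1}_{\{N=n\}}\,\mathbf{E}[S_\delta\mid\mathcal{F}_\lambda] = \mathbf{1}_{\{N=n\}}\,\mathbf{E}[S_{1/n}\mid\mathcal{F}_\lambda] \le \varepsilon\,\mathbf{1}_{\{N=n\}} \]
by the definition of $N$; summing over $n$ gives $\mathbf{E}[S_\delta\mid\mathcal{F}_\lambda]\le\varepsilon$, which is the desired estimate. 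The main technical obstacle is the measurability of the oscillation $S_h$: since the supremum ranges over an uncountable set of times, I would use the right-continuity of $X$ to rewrite $S_h$ as the supremum of $|X(\lambda+q)-X(\lambda)|$ over rational offsets $q\in[0,h]$ together with the endpoint value $|X(\lambda+h)-X(\lambda)|$. This exhibits $S_h$ as a countable supremum of $\mathcal{A}$-measurable random variables, which settles both its measurability and the fact that it dominates $|X(\rho)-X(\lambda)|$ for every stopping time $\rho$ with $\lambda\le\rho\le\lambda+\delta$.
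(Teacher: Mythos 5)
Your proof is correct, but it takes a genuinely different route from the paper's. The paper's own argument works only with deterministic shifts: it notes that $w \mapsto \mathbf{E}[X(\lambda+w)\mid\mathcal{F}_\lambda]$ is right-continuous at $w=0$ with value $X(\lambda)$, sets $\delta' = \tfrac12\sup\{w>0 \colon |X(\lambda)-\mathbf{E}[X(\lambda+w)\mid\mathcal{F}_\lambda]|\le\varepsilon\}$, and takes $\delta=\min\{\delta',1\}$. You instead dominate $|X(\lambda)-\mathbf{E}[X(\rho)\mid\mathcal{F}_\lambda]|$ by $\mathbf{E}[S_\delta\mid\mathcal{F}_\lambda]$, where $S_h$ is the pathwise oscillation on $[\lambda,\lambda+h]$, drive $\mathbf{E}[S_{1/n}\mid\mathcal{F}_\lambda]$ to zero by conditional dominated convergence, and pick $\delta=1/N$ by a first-entrance device over the countable family $\{1/n\}$, which also handles the random threshold $\varepsilon$ cleanly. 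The difference is substantive, and in your favor: the oscillation $S_\delta$ dominates $|X(\rho)-X(\lambda)|$ pointwise for \emph{every} stopping time $\rho$ in the window, so uniformity over stopping times is automatic, whereas control of $\mathbf{E}[X(\lambda+w)\mid\mathcal{F}_\lambda]$ for each fixed $w$ does not by itself control $\mathbf{E}[X(\rho)\mid\mathcal{F}_\lambda]$ for random $\rho$. Indeed, take $\lambda=0$, $U$ uniform on $(0,1]$, and $X_t=\mathbf{1}_{\{U\le t<U+\epsilon_0\}}$ with $\epsilon_0<\varepsilon$: every deterministic shift gives $\mathbf{E}[X_w]\le\epsilon_0\le\varepsilon$, so the paper's recipe yields $\delta=1$, yet the stopping time $\rho=U\wedge 1$ gives $\mathbf{E}[X_\rho]=1$. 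Thus the passage from fixed times to stopping times, which the paper's proof leaves implicit (and which its choice of $\delta$ does not actually secure), is exactly what your $S_\delta$ supplies; the only costs are the boundedness (or class $\mathcal{D}$) assumption needed for conditional dominated convergence, which is in force throughout the paper, and the measurability of the uncountable supremum, which you correctly resolve via rational offsets together with the right endpoint.
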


\begin{proof}
Because the process $X$ is right continuous, the function $w \mapsto {%
\mathbf{E}}[X(\lambda+w)\mid\mathcal{F}_\lambda]$ is right-continuous at $%
w=0 $ on $\Omega$, and it is equal to $X(\lambda)$ at $w=0$. By defining
\begin{equation*}
\delta^{\prime }= \frac{1}{2}\sup\{w > 0 \colon |X(\lambda) - {\mathbf{E}}%
[X(\lambda+w)\mid \mathcal{F}_\lambda]| \leq \varepsilon\},
\end{equation*}
we obtain a positive $\mathcal{F}_\lambda$-measurable function such that
(\ref{equ91}) is satisfied for every stopping time $\rho$, $\lambda \leq \rho
\leq \lambda+ \delta^{\prime }$. The proof of the Lemma is complete by
setting $\delta = \min\{\delta^{\prime },1\}$.
\end{proof}
\bigskip

This Lemma can also be used to adapt the
proof of Proposition 7 in Laraki and Solan (2005)
in order to prove Lemma \ref{lemma zero sum}, which states that the value process is right continuous.

One can use Lemma \ref{lemma new} to improve some of the bounds given in Section \ref{sec1}.
We chose not to use this Lemma in the paper, so as to unify the arguments given for the various bounds.

\end{document}